\documentclass[a4paper,12pt]{article}
\usepackage[utf8]{inputenc}
\usepackage{t1enc}
\usepackage{amsmath,amssymb}
\usepackage{amsfonts}
\usepackage{array}
\PassOptionsToPackage{hyphens}{url}
\usepackage{bbm,xurl}
\usepackage[unicode,colorlinks]{hyperref}
\usepackage{amsthm,comment}
\usepackage[hmargin=1in,vmargin=1in]{geometry}
\usepackage{tikz}
\hypersetup{colorlinks=true,breaklinks=true}
\numberwithin{equation}{section}

\newtheorem{theorem}{Theorem}[section]
\newtheorem{proposition}[theorem]{Proposition}
\newtheorem{lemma}[theorem]{Lemma}

\theoremstyle{definition}

\newcommand{\wt}{\widetilde}
\renewcommand{\P}{\mathbf P}
\newcommand{\E}{\mathbf E}
\renewcommand{\d}{\mathrm d}
\newcommand{\R}{\mathbb R}
\newcommand{\Z}{\mathbb Z}
\renewcommand{\O}{\mathcal O}
\newcommand{\ind}{\mathbbm 1}

\DeclareMathOperator*{\argmax}{argmax}

\author{Bori Anna M\'esz\'aros\thanks{Department of Stochastics, Institute of Mathematics,
Budapest University of Technology and Economics, M\H uegyetem rkp.\ 3., H-1111 Budapest, Hungary.
E-mail: {\tt meszaros.bori@edu.bme.hu}}
\and
B\'alint Vet\H o\thanks{Department of Stochastics, Institute of Mathematics,
Budapest University of Technology and Economics, M\H uegyetem rkp.\ 3., H-1111 Budapest, Hungary
and HUN--REN Alfr\'ed R\'enyi Institute of Mathematics, Re\'altanoda u.\ 13--15., H-1053 Budapest, Hungary.
E-mail: {\tt vetob@math.bme.hu}}
}

\title{Stabilization time of finite configurations with a second class particle in discrete TASEP}

\begin{document}

\maketitle

\begin{abstract}
We consider finite configurations of particles and holes sampled according to Bernoulli product measure
and with a second class particle added to a random position.
The stabilization time is the number of steps needed to reach an ordered state under discrete time TASEP dynamics with parallel update.
We describe the additional time of stabilization caused by the presence of the second class particle.
\end{abstract}

\section{Introduction}

The totally asymmetric simple exclusion process (TASEP) is an interacting particle system
with configurations on the integer lattice $\Z$ where each lattice point is either occupied or vacant.
It was first introduced in continuous time in \cite{S70}, the discrete time version was studied e.g.\ in \cite{S93}.
Our interest is in the dynamics with parallel update
where particles jump to the right by one independently with a fixed probability $q$ if the target position is empty.
See also \cite{MS11} for other possible discrete time TASEP dynamics.

The particle dynamics can be described in terms of the height function determined modulo a constant global shift
by particles and holes corresponding to segments with slope $-1$ and $+1$ respectively.
Particle jumps mean that pairs of segments with slope $-1$ on the left and with slope $+1$ on the right
are swapped with probability $q$ turning the local maximum of the height into a local minimum.

The TASEP evolution on finite segments of the lattice corresponds to the swap dynamics of finite sequences of two types.
The stabilization time of a string is the number of steps from the initial configuration to the final state
under the swap dynamics with $q=1$ and with parallel update.
The finiteness of the stabilization time for any initial string is an exercise in \cite{UM96}
interpreted in terms of a line of soldiers facing in random directions.
For product Bernoulli initial condition with density $p\in[0,1]$,
the asymptotic distribution of the stabilization time was described in \cite{FNN15}
as the length of the string tends to infinity, see Theorem \ref{thm:twotypes} below.

In the present paper, we study the stabilization time of random sequences with three types.
In addition to particles and holes, the third type is the second class particle which was defined for TASEP in \cite{FK95} and served as the main tool to bound the current fluctuations and diffusivity in ASEP in \cite{BS10}.
Parallel update is not a priori well-defined with second class particles because of possible conflicts.
We resolve the conflicts by the convention that the dynamics for three types reduces to that for two types
under the projection which replaces second class particles by holes.
We compare the full stabilization time for three types to that for the projected dynamics
in the presence of one second class particle.
Started from product Bernoulli initial strings together with the location of the second class particle,
we describe the asymptotic distribution of the additional time in the limit for long strings.

\subsection{Stabilization with two types}

We introduce the sorting model for binary strings that corresponds to discrete time TASEP with parallel update,
and we recall the result on the stabilization time of random strings from \cite{FNN15}.
Let $\Omega_n^{(2)}=\{0,2\}^n$ be the set of strings of length $n$ with two types.
We define the evolution step $Z^{(2)}:\Omega_n^{(2)}\to\Omega_n^{(2)}$ in a way that it swaps neighouring elements which are in increasing order,
that is, every occurrence of $02$ is replaced by $20$ within the string.
Replacing each $0$ with a particle and each $2$ with a hole, $Z^{(2)}$ describes a discrete time step of TASEP with parallel update.

Since no two occurrences of $02$ can overlap, the definition does not result in conflicts between overlapping pairs.
The successive application of $Z^{(2)}$ eventually sorts the elements in non-increasing order, and the process stabilizes in at most $n-1$ steps.
Let
\begin{equation}
T_n^{(2)}(\omega)=\min\{k\ge0 : (Z^{(2)})^k(\omega)=(Z^{(2)})^{k+1}(\omega)\}
\end{equation}
denote the stabilization time for the binary sequence $\omega\in\Omega_n^{(2)}$ where $(Z^{(2)})^k$ is the $k$fold application of $Z^{(2)}$.

The natural measure on $\Omega_n^{(2)}$ is the Bernoulli product measure
where each element of the string is $2$ with probability $p$ and $0$ with probability $1-p$ independently.
The main result in \cite{FNN15} identifies the asymptotic fluctuations of the stabilization time $T_n^{(2)}$ as $n\to\infty$ depending on $p$.

\begin{theorem}\label{thm:twotypes}
For $p\in(0,1)\setminus\{1/2\}$ fixed, we have that
\begin{equation}\label{Tn2p>1/2}
\frac{T_n^{(2)}-\max(p,1-p)n}{\sqrt n}\stackrel\d\Longrightarrow\mathcal N(0,p(1-p))
\end{equation}
in distribution as $n\to\infty$
where the limit is Gaussian with mean $0$ and variance $p(1-p)$.

For $p=1/2$,
\begin{equation}\label{Tn2p=1/2}
\frac{T_n^{(2)}-n/2}{\sqrt n}\stackrel\d\Longrightarrow\frac{\chi_3}2
\end{equation}
in distribution as $n\to\infty$ where $\chi_3$ is the chi distribution with parameter $3$.

For $p=1/2+\lambda/(2\sqrt n)$ with $\lambda\in\R$ fixed, we have
\begin{equation}
\frac{T_n^{(2)}-n/2}{\sqrt n}\stackrel\d\Longrightarrow M_1^\lambda-\frac12B_1^\lambda
\end{equation}
in distribution as $n\to\infty$ where $B_t^\lambda$ is a Brownian motion with drift $\lambda$
and $M_t^\lambda=\max_{s\in[0,t]}B_s^\lambda$ is its running maximum process.
\end{theorem}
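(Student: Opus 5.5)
The plan is to turn $T_n^{(2)}$ into an exact, or nearly exact, functional of a simple random walk. Then Donsker's invariance principle gives the limits, and for $p=1/2$ Pitman's $2M-B$ theorem finishes the argument.

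\emph{Step 1 (combinatorial formula).} Track the zeros of $\omega$. Label them from right to left as $1,2,\dots$, and let $h_k$ be the number of $2$'s to the right of the $k$th zero. The final state is all $2$'s followed by all $0$'s, so zero number $k$ must cross exactly $h_k$ twos. A zero is blocked only by the zero immediately to its right, and only for one step after that zero moves. This should give the recursion $t_k=\max(t_{k-1}+1,h_k)$ for the time $t_k$ of its last jump, valid whenever $h_k>0$. One proves it by induction on $k$, checking that once zero $k$ has started moving it jumps at every step until it is sorted. Unfolding the recursion gives
\[
T_n^{(2)}=\max_{j:\,h_j>0}\bigl(h_j+(\#\text{zeros to the left of zero }j)\bigr),
\]
possibly up to an additive $O(1)$ correction.

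Now introduce the walk $S_i=\#\{0\text{'s in }[1,i]\}-\#\{2\text{'s in }[1,i]\}$ and write $N_2$ for the total number of $2$'s. The quantity inside the maximum at a zero in position $i$ equals $N_2+S_i+O(1)$. Hence
\[
T_n^{(2)}=N_2+\max_{i\le i^*}S_i+O(1),
\]
where $i^*$ is the position of the last $2$, because the restriction $h_j>0$ excludes the final block of zeros. Since the final block of zeros has length $O_\P(1)$ for $p$ bounded away from $0$, and in the critical window, we get $T_n^{(2)}=N_2+\max_{0\le i\le n}S_i+O_\P(1)$.

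\emph{Step 2 (off-critical case).} Suppose $p>1/2$. Then $S$ has negative drift, so $\max_i S_i=O_\P(1)$ and $T_n^{(2)}=N_2+O_\P(1)$. The CLT for the binomial variable $N_2$ gives the centering $pn$ and the variance $p(1-p)$. Suppose instead $p<1/2$. Then $S$ has positive drift, and $\max_i S_i-S_n=O_\P(1)$. Hence $T_n^{(2)}=N_2+S_n+O_\P(1)=N_0+O_\P(1)$, which gives centering $(1-p)n$ and the same variance. Together these are \eqref{Tn2p>1/2}.

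\emph{Step 3 (critical window).} Take $p=1/2+\lambda/(2\sqrt n)$. Since $N_2=(n-S_n)/2$, we have
\[
T_n^{(2)}-n/2=\max_i S_i-\tfrac12 S_n+O_\P(1).
\]
The increments of $S$ are $\pm1$ with mean $1-2p=-\lambda/\sqrt n$. Donsker's theorem with drift therefore gives $S_{\lfloor nt\rfloor}/\sqrt n\Rightarrow$ a Brownian motion with drift $\mp\lambda$. After matching the sign convention with $B^\lambda$ (replacing $S$ by $-S$, or reflecting the string, if needed), the continuous mapping theorem applied to $(\max,\text{endpoint})$ gives the limit $M_1^\lambda-\tfrac12B_1^\lambda$.

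For $\lambda=0$, Pitman's theorem says that $2M_t-B_t$ is a three-dimensional Bessel process. Hence $M_1-\tfrac12B_1=\tfrac12(2M_1-B_1)$ is distributed as $\chi_3/2$, which is \eqref{Tn2p=1/2}.

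\emph{Main obstacle.} The delicate part is Step 1: establishing the exact recursion $t_k=\max(t_{k-1}+1,h_k)$ under parallel update. In particular one must verify that a zero, once released, never waits again, and handle the boundary conventions, such as zeros with $h_k=0$ and how the definition of $T_n^{(2)}$ counts steps, so that all corrections are provably $O(1)$. I would first check the recursion on small strings and then prove it by induction on the zeros from the right.
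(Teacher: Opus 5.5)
\textbf{Context.} The paper gives no proof of this theorem. It is quoted from \cite{FNN15} as background, so there is no internal argument to compare yours with. Judged on its own, your architecture is sound: an exact last-passage formula for $T_n^{(2)}$ in terms of the height walk, then the binomial CLT off criticality, and Donsker, continuous mapping and Pitman's $2M-B$ theorem in the critical window. The limit computations in Steps 2 and 3 are right, apart from a sign point noted at the end. The real problem is how you propose to prove Step 1.

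\textbf{The flawed step.} You plan to prove by induction that a zero which has started moving jumps at every step until it is sorted. This is false. Take $\omega=0200222$:
\[
0200222\to2002022\to2020202\to2202020\to2220200\to2222000 .
\]
The leftmost zero jumps at step $1$. At step $2$ it is blocked by the zero initially at position $3$, which was itself blocked at step $1$. It then moves at steps $3,4,5$. So an induction built on ``once released, never waits again'' cannot be carried out.

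\textbf{The fix.} The recursion $t_k=\max(t_{k-1}+1,h_k)$ is nevertheless true (here $t_2=4$, $h_3=4$, $t_3=5$), but it needs a different argument.
\begin{itemize}
\item The lower bounds $t_k\ge h_k$ and $t_k\ge t_{k-1}+1$ are immediate. Zero $k$ can only jump into its final site $n-k+1$ after zero $k-1$ has reached $n-k+2$.
\item For the upper bound, look at the \emph{last} step $s$ at which zero $k$ is unsorted but does not jump. At time $s-1$, zero $k-1$ sits directly to its right, and both still have the same number $r$ of jumps to make. Zero $k$ makes them at steps $s+1,\dots,s+r$. Zero $k-1$ can only use steps $s,s+1,\dots$, so $t_{k-1}\ge s+r-1=t_k-1$. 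If no such $s$ exists, then $t_k=h_k$.
\end{itemize}
Equivalently, let $T(k,j)$ be the time of the $j$th jump of zero $k$, and $g_k$ the number of $2$s between zeros $k$ and $k-1$. Then $T(k,j)=1+\max\bigl(T(k,j-1),T(k-1,j-g_k)\bigr)$, with $T(\cdot,i)=0$ for $i\le 0$. Since $j\mapsto T(k-1,j)-j$ is non-decreasing, the unfolded maximum collapses onto $j=h_{k-1}$.

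\textbf{Two smaller corrections.}

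First, the unfolded formula is exact: for unsorted $\omega$,
\[
T_n^{(2)}=N_2-1+\max\{S_i:\omega_i=0,\ i<i^*\}.
\]
It differs from $N_2+\max_{0\le i\le n}S_i$ by at most $L+R+1$, where $L$ is the initial block of $2$s and $R$ the final block of $0$s. So the error is $O_\P(1)$, not a deterministic $O(1)$; for $2\cdots202$ one has $T_n^{(2)}=1$ while $N_2+\max_i S_i$ is of order $L$. This is still enough, since $L$ and $R$ are tight in every regime of the theorem.

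Second, for $p=1/2+\lambda/(2\sqrt n)$ your walk has drift $-\lambda/\sqrt n$, so Donsker gives $M_1^{-\lambda}-\frac12B_1^{-\lambda}$. Replacing $S$ by $-S$ alone does not fix this: it turns the functional into $-\min S+\frac12S_n$. The correct justification is the reversal--complement map $\omega_i\mapsto\bar\omega_{n+1-i}$, with $\bar0=2$ and $\bar2=0$. This map commutes with $Z^{(2)}$, preserves $T_n^{(2)}$, and sends Bernoulli($p$) to Bernoulli($1-p$). Equivalently, reflecting and time-reversing Brownian motion shows that the law of $M_1^\lambda-\frac12B_1^\lambda$ is symmetric in $\lambda$.

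With these repairs your argument proves the theorem.
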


\subsection{Stabilization model with three types}

The purpose of this paper is to investigate the case of three types.
Let $\Omega_n=\{0,1,2\}^n$ denote the space of strings of length $n$ of three types.
The evolution step $Z:\Omega_n\to\Omega_n$ would ideally swap all possible increasing substrings $01,02,12$
into their decreasing counterparts.
However, these substrings can overlap and form the substring $012$.
To resolve the conflict, one has to give priority to one of the two overlapping substrings.
In this paper, we use the convention that the substring $12$ has priority over $01$ and changes first into $21$, that is,
\begin{equation}\label{priority}
Z(\dots012\dots)=\dots021\dots.
\end{equation}
Having this settled, the evolution step $Z$ is well-defined.

In terms of a particle system, $0$ corresponds to a particle, $2$ is a vacant site, and $1$ is a second class particle.
Under the TASEP dynamics, the particles jump to the right if their right neighbour is vacant.
Second class particles also jump to the right to vacant locations but they can also jump to the left if there is a particle on their left.
The two possible jumps of second class particles can lead to a conflict in discrete time with parallel update
if three consecutive sites contain a particle, a second class particle and a hole.
We resolve this conflict by giving priority to the right jumps of the second class particle which is the priority rule given in \eqref{priority}.

Formally, if $\omega_i=0,\omega_{i+1}=1,\omega_{i+2}=2$ for some $i=1,\dots,n-2$ then $(Z\omega)_i=0,(Z\omega)_{i+1}=2,(Z\omega)_{i+2}=1$.
Furthermore, if $\omega_{i-1}\ge\omega_i<\omega_{i+1}\ge\omega_{i+2}$ for some $i=1,\dots,n-1$
(with the inequality always considered satisfied for $\omega_0$ and $\omega_{n+1}$) then $(Z\omega)_i=\omega_{i+1},(Z\omega)_{i+1}=\omega_i$.

The two natural projections $\Pi_1,\Pi_2:\Omega_n\to\Omega_n^{(2)}$ are given by
\begin{equation}
(\Pi_i\omega)_j=\pi_i(\omega_j)
\end{equation}
for $i=1,2$ and $j=1,\dots,n$ and $\omega\in\Omega_n$
where $\pi_1(0)=\pi_1(1)=0$, $\pi_1(2)=2$ and $\pi_2(0)=0$, $\pi_2(1)=\pi_2(2)=2$.
In other words, the two types under $\Pi_1$ are $\{0,1\}$ and $\{2\}$, that is, we consider $0$ and $1$ identical.
Under $\Pi_2$, the types are $\{0\}$ and $\{1,2\}$.
There is no evolution rule for three types under which both natural projections evolve according to the dynamics for two types.
With our convention, the evolution for three types projected by $\Pi_1$ follows the dynamics of two types, that is, $\Pi_1Z=Z^{(2)}\Pi_1$ but it does not hold for $\Pi_2$ instead of $\Pi_1$.

Starting from any string $\omega\in\Omega_n$, the repeated applications of the evolution step $Z$ eventually stabilize,
and the procedure results in a string where all elements are sorted in non-increasing order.
The stabilization time $T_n(\omega)$ is the number of steps needed to achieve the non-increasing order starting from $\omega$.
Formally, we let
\begin{equation}
T_n(\omega)= \min\{k\ge0 : Z^k(\omega)=Z^{k+1}(\omega)\}.
\end{equation}
In the example
\begin{equation}
0122102 \xrightarrow{Z} 0212120 \xrightarrow{Z} 2021210 \xrightarrow{Z} 2202110 \xrightarrow{Z} 2220110 \xrightarrow{Z} 2221010\xrightarrow{Z} 222110
\end{equation}
the stabilization time is $6$, that is, $T_7(0122102)=6$.

In this paper, we consider a special initial distribution of the string $\omega\in\Omega_n$ with three types
where there is a single element of type $1$ in $\omega$.
The initial string $\omega$ is constructed in two steps as follows.
First, we sample the locations of $2$s according to the Bernoulli product measure of parameter $p$,
that is, we let $\omega'\in\Omega_n$ be a string where each element is independently equal to $2$ with probability $p$ and is $0$ with probability $1-p$.
Then conditionally given the set $A(\omega')$ of locations of $0$s in $\omega'$, we sample $U$ with uniform distribution on the set $A(\omega')$.
Then we change the corresponding element $\omega'_U$ into $1$ to obtain $\omega$.
If $A(\omega')=\emptyset$, then we let $\omega_1=1$.
We refer to the distribution of $\omega$ as the Bernoulli initial condition of parameter $p$ with a second class particle in uniform position.

\subsection{Main results}

The stabilization time result in Theorem \ref{thm:twotypes} describes the asymptotic behaviour of $T_n^{(2)}(\Pi_1(\omega))$
for any string $\omega\in\Omega_n$ with three types.
After $T_n^{(2)}(\Pi_1(\omega))$ steps in a string $\omega\in\Omega_n$, all the $2$s are sorted to the beginning of the string
but there might still be some $0$s before the position of $1$ in the string, which need to be swapped with $1$.
Hence $T_n^{(2)}(\Pi_1(\omega))$ is not equal to the total stabilization time $T_n(\omega)$.
We denote by $E_n$ the number of excess steps, formally, we let
\begin{equation}
E_n(\omega)=T_n(\omega)-T_n^{(2)}(\Pi_1(\omega)).
\end{equation}
Our first main result is the following characterization of the excess for fixed $p$.

\begin{theorem}\label{thm:fixp}
Let $p\in(0,1)$ be fixed and let $\omega\in\Omega_n$ be sampled according to the Bernoulli initial condition of parameter $p$
with a second class particle in uniform position.
Then the excess $E_n$ converges in distribution
\begin{equation}
E_n\stackrel\d\Longrightarrow\left\{\begin{array}{ll}1&\mbox{if }p>1/2,\\X&\mbox{if }p=1/2,\\0&\mbox{if }p<1/2\end{array}\right.
\end{equation}
as $n\to\infty$ where the distribution of $X$ is given by $\P(X=0)=\P(X=1)=1/2$.
\end{theorem}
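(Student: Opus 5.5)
The plan is to reduce $E_n$ to a statement about where the second class particle sits at time $T^{(2)}_n(\Pi_1\omega)$, and then to see which of two competing mechanisms causes the final two-type move. Since $\Pi_1Z=Z^{(2)}\Pi_1$, the positions of the $2$s evolve exactly as in the two-type model. After $T:=T^{(2)}_n(\Pi_1\omega)$ steps all $2$s form an initial block, followed by a block of $0$s containing the single $1$. From then on the only moves are $01\to10$, and the $0$s never move again. Hence
\[
E_n=\#\{\text{$0$s to the left of the $1$ in } Z^T\omega\}.
\]
So it suffices to show that, with probability tending to one, this number is $0$ or $1$, and to identify when it is $1$.

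\emph{Step 1: the $1$ reaches the front early.} I would first show that, with high probability, the $1$ becomes the leftmost element of the $\{0,1\}$-cluster it belongs to well before time $T$, and stays so. Here a cluster means a maximal run of non-$2$ symbols. In one step, $1$ swaps left with a $0$ unless a $2$ is directly to its right, and by \eqref{priority} it then jumps right instead. The idea is to compare the trajectory of $1$ with the height function. Under Bernoulli initial data, $U$ is at distance of order $n$ from both ends, and the two-type dynamics takes time of order $\max(p,1-p)n$ to stabilize. I expect the $1$ to behave like a second class particle, moving with the local characteristic speed $1-2\rho$ for two-type TASEP with $q=1$. I would also use the fact that the $0$s and $2$s in front of it are sorted in linear time. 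From this I would argue that the $1$ reaches the interface between the $2$-block and the $0$-block at time $T-\varepsilon n$, and thereafter always sits immediately behind the last $2$ it meets. This is the step I expect to be the main obstacle. I would try to control it through the known variational or last-passage description of $T^{(2)}_n$ underlying Theorem~\ref{thm:twotypes}, coupled with a discrepancy (second class particle) description of the $1$.

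\emph{Step 2: identify the last two-type move.} Given Step 1, $E_n$ depends only on which event causes the final move at time $T$. The last move is a single $02\to20$ at the interface between the $2$-block and the $0$-block.

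If $p>1/2$, Theorem~\ref{thm:twotypes} shows that $T^{(2)}_n$ is governed by the $0$s travelling right. In that case the final move is a $0$ from the left joining the $0$-block at its left end. The $1$ was already at the front by Step 1, so this $0$ lands directly to the left of the $1$, and $E_n=1$. The exceptional case where the $1$ is itself this last particle has vanishing probability, since $U$ is uniform among order-$n$ zeros.

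If $p<1/2$, the bottleneck is the last $2$ travelling left through the $0$s. At the final step this $2$ sits right of the front of the $0$-block, which is the $1$. The move is then $12\to21$, so the $1$ ends leftmost among the non-$2$s, and $E_n=0$.

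\emph{Step 3: the critical case $p=1/2$.} Here both mechanisms have the same order, and the question is which one attains the maximum in the representation behind \eqref{Tn2p=1/2}. Under $\omega\mapsto$ reversed complement, the Bernoulli$(1/2)$ measure is invariant, the dynamics is preserved, and the roles of the two mechanisms are exchanged. By this symmetry, each mechanism is the last one with probability tending to $1/2$. Ties have vanishing probability because the limit $\chi_3/2$ is continuous. Combined with Steps 1 and 2, this gives $E_n\Rightarrow X$. The cases $p\neq1/2$ follow as above, completing the proof of Theorem~\ref{thm:fixp}.
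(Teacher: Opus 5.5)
Your opening reduction is correct. Since $\Pi_1Z=Z^{(2)}\Pi_1$, after $T=T_n^{(2)}(\Pi_1\omega)$ steps only $01\to10$ moves remain, so $E_n$ is the number of $0$s to the left of the $1$ in $Z^T\omega$. The rest does not go through.

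\textbf{Step~1.} It carries all the weight and is left unproven. As stated it also fails generically. When the $2$ in front of the $1$ moves left through $0$s, as in $0210\mapsto2010\mapsto\cdots$, the $1$ trails it with one $0$ in between. This lag is harmless only because of the simultaneous $01\to10$ swap, as in the proof of \eqref{Un-R}.

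\textbf{Step~2.} The dichotomy it relies on does not exist. The only unsorted binary strings that become sorted in one step are $2^a\,0\,2\,0^b$. So for every $p$ the final two-type move is the leftmost $0$ swapping with the rightmost $2$: ``$0$s travelling right'' and ``the last $2$ travelling left'' are the same move. Whether $E_n=0$ or $E_n=1$ depends only on whether the $1$ sits at position $a+1$ of $\omega^{(T-1)}$ or behind the $2$. That is decided by the trajectory of the $1$, hence by $U$. Saying that $U$ is uniform among order-$n$ zeros does not settle this, because the $1$ is not a tagged zero: its rank among the $0$s changes at every $01\to10$ swap.

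\textbf{The missing idea} is the exact characterization of Proposition~\ref{prop:KU}: $\{E_n=0\}=\{U<K\}$ and $\{E_n>1\}\subseteq\{U>n-R\}$. Here the switching position $K$ is read off from the height profile of $\Pi_1\omega$ by tracking the points $M_k$ through their bulk, edge and annihilation phases. With it, the fixed-$p$ cases reduce to random-walk facts. For $p>1/2$ one uses $K\le M$ with $M/\sqrt n\to0$ in probability. For $p<1/2$ one uses $M/n\to1$ and $\E(M-K)=O(1)$. In both cases $R$ is geometric.

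\textbf{The case $p=1/2$.} Here your picture is wrong in structure, not just incomplete. By \eqref{Um} and Proposition~\ref{prop:KU}, $\P(E_n=0\mid\Pi_1\omega)=\P(U<K\mid\Pi_1\omega)\approx K/n\approx M/n$. This converges in law to $\argmax_{[0,1]}B$, a nondegenerate variable in $(0,1)$. So $E_n$ is not asymptotically a function of $\Pi_1\omega$. Deciding it by ``which mechanism attains the maximum'' in a formula for $T_n^{(2)}$ would make it one. The value $1/2$ is $\E(\argmax_{[0,1]}B)$, and reaching it requires the bound $\E(M-K\mid M)=O(\sqrt n)$ of Proposition~\ref{prop:KM}.

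\textbf{Step~3.} The symmetry you invoke is not available. Reversal combined with $0\leftrightarrow2$ maps the priority rule $012\to021$ of \eqref{priority} to $012\to102$. It exchanges $\Pi_1$ and $\Pi_2$, and the paper notes that no rule is compatible with both. It also maps ``$1$ uniform among the $0$s'' to ``$1$ uniform among the $2$s''. So neither the three-type dynamics nor the initial law is invariant.
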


The second main result describes the limiting distribution of the excess $E_n$ in the case when $p$ is critically scaled with $n$ around $1/2$.

\begin{theorem}\label{thm:crit}
Let $p=\frac{1}{2}-\frac{\lambda}{2\sqrt{n}}$ for some $\lambda\in\R$ be fixed
and let $\omega\in\Omega_n$ be sampled according to the Bernoulli initial condition of parameter $p$ with a second class particle in uniform position.
Then we have that
\begin{equation}
E_n\stackrel\d\Longrightarrow X^{(\lambda)}
\end{equation}
as $n\to\infty$ where the distribution of $X^{(\lambda)}$ is given by
\begin{equation}
\P(X^{(\lambda)}=0)=1-\P(X^{(\lambda)}=1)=\E(\argmax_{s\in[0,1]}B_s^{(\lambda)})
\end{equation}
with a Brownian motion $B_s^{(\lambda)}$ with drift $\lambda$.
\end{theorem}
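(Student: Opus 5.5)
The plan is to reduce the statement to a combinatorial identity for a single deterministic string and then pass to a scaling limit by Donsker's theorem.

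\textbf{Combinatorial step.} Let $\omega'\in\Omega_n^{(2)}$ be the binary string and $\omega$ the string obtained by turning the $0$ at position $U$ into a $1$, so that $\Pi_1\omega=\omega'$.
\begin{itemize}
\item I would describe $T_n^{(2)}(\omega')$ through the standard last-passage recursion for parallel TASEP. Index the $0$s (particles) from the right. The $j$th particle finishes at time $\tau_j=\max(\tau_{j-1}+1,\,h_j)$ up to boundary conventions, where $h_j$ is the number of $2$s to its right that it must pass. Then $T_n^{(2)}=\max_j \tau_j$.
\item Unwinding the recursion gives $T_n^{(2)}$ as a maximum, over positions $k$, of a functional of the walk $S_k=\sum_{i\le k}(\ind\{\omega'_i=0\}-\ind\{\omega'_i=2\})$, essentially $\#\{2\text{s}\}+\max_k S_k$ plus bounded terms. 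This is the mechanism behind Theorem \ref{thm:twotypes}.
\item The $1$ behaves as a $0$ under $\Pi_1$ except that the $0$s on its left must overtake it. Each such overtaking happens at a time when the relevant pair $01$ is not blocked by the priority rule \eqref{priority}. The key claim is that this costs at most one extra step in total, so $E_n\in\{0,1\}$.
\item The extra step is needed exactly when the $1$ lies to the right of the critical particle, namely the particle realizing the maximum. That particle corresponds to the argmax $K$ of $(S_k)$. In that case the last $0$ to settle arrives behind the $1$ only at time $T_n^{(2)}$ and must still swap with it.
\end{itemize}
So the target lemma is that, outside an event of vanishing probability (ties in the argmax, or $U$ within $O(1)$ of $K$), $E_n=0$ iff $U\le K$.

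\textbf{Proving the lemma.} I would track the $1$ jointly with the $\Pi_1$-dynamics. The $1$ is a marked $0$ that swaps leftward with any $0$ immediately to its left, unless it is itself jumping right over a $2$.
\begin{itemize}
\item Consider the $0$s to the left of $U$. Show by induction along the recursion that each of them finishes in the three-type dynamics at most one step later than the corresponding $\tau_j$.
\item Show that this delay actually occurs for the particle realizing $\max_j\tau_j$ exactly when it starts to the left of $U$.
\end{itemize}
I expect this exact identification to be the main obstacle. The priority convention \eqref{priority} and parallel update create local interactions, so I would first verify the claim on small examples such as the one displayed before Theorem \ref{thm:fixp}. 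I would then formulate a monotone coupling between the three-type and two-type height functions that differ by at most one time step.

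\textbf{Scaling step.} With $p=\frac12-\frac{\lambda}{2\sqrt n}$, the increments of $S$ have mean $1-2p=\lambda/\sqrt n$ and variance $\to1$. Donsker's theorem with drift gives $S_{\lfloor ns\rfloor}/\sqrt n\Rightarrow B_s^{(\lambda)}$ on $[0,1]$.
\begin{itemize}
\item Brownian motion with drift has an a.s.\ unique argmax, so the argmax map is continuous at almost every path. Hence $K/n\Rightarrow\argmax_{s\in[0,1]}B^{(\lambda)}_s$, and ties or near-ties of the discrete maximum have vanishing probability.
\item Conditionally on $\omega'$, the index $U$ is uniform on the set of $0$s. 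By the law of large numbers, $\#\{0\text{s in }[1,\lfloor ns\rfloor]\}/\#\{0\text{s}\}\to s$ uniformly in $s$. Therefore $\P(U\le K\mid\omega')=K/n+o(1)$, and the exceptional event that $U$ lies within $O(1)$ of $K$ also has vanishing probability.
\item Taking expectations and using bounded convergence gives $\P(E_n=0)\to\E(\argmax_{s\in[0,1]}B^{(\lambda)}_s)$. Since $E_n\in\{0,1\}$ with high probability, this yields Theorem \ref{thm:crit}.
\end{itemize}
As a consistency check, for fixed $p$ the drift sends the argmax to $0$ or to $1$, and Theorem \ref{thm:fixp} is the corresponding degenerate case.
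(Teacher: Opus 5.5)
\textbf{There is a genuine gap in your combinatorial step.} Your scaling step matches the paper's: conditional uniformity of $U$ among the $0$s, Donsker's theorem, and a.s.\ uniqueness of the Brownian argmax. The problem is that the switching position is \emph{not} the argmax of $S$.

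The correct characterization is Proposition~\ref{prop:KU}: $\{E_n=0\}=\{U<K\}$. Here $K$ is defined in \eqref{defK}; equivalently, it is the rightmost $k$ with $S_{k-1}=S_k-1$ and $\max_{L\le l\le k-2}S_l=S_k-2$, i.e.\ two new record heights reached in consecutive steps. One always has $K\le M$, and whenever $K\le U<M$ the $1$ lies left of every maximizer, yet $E_n=1$.

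Concretely, take $\omega=0\,1\,2\,0\,2\,0\,0\,2$. Then $S=(1,2,1,2,1,2,3,2)$, the unique maximizer is $7$, and $U=2$. Direct computation gives $T_8(\omega)=6$ but $T_8^{(2)}(\Pi_1\omega)=5$. By Proposition~\ref{prop:KU}, inserting further blocks $0\,2$ after the initial $0\,1\,2$ moves the maximizer arbitrarily far right while keeping $E_n=1$.

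The mechanism you are missing is the cascade of first-passage positions $M_0>M_1>\dots$. Whenever some $M_k$ is annihilated at the left edge, the priority rule $012\to021$ moves a $1$ sitting between $M_k$ and $M_{k-1}$ to the right of $M_{k-1}$. So a $1$ starting anywhere in $[K,M)$ ends up to the right of $M_0$. Consequently your target lemma (``$E_n=0$ iff $U\le\argmax$, up to ties and an $O(1)$ window'') is false, and no induction along the last-passage recursion can prove it.

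Two further claims need correcting:
\begin{itemize}
\item $E_n\in\{0,1\}$ fails deterministically: $\omega=0001$ has $E_4=3$. The true statement is \eqref{Un-R}, and the event $\{U>n-R\}$ is negligible because $R$ is geometric.
\item Ties of the discrete maximum occur with probability bounded away from $0$ at criticality. Only the spread of the maximizers is $o(n)$.
\end{itemize}

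\textbf{A second ingredient is missing even with the correct $K$.} You need $\P(U\in[K,M))\to0$, i.e.\ $M-K=o(n)$ in probability. This does not follow from Donsker's theorem. $K$ is determined by microscopic features of the path (two consecutive ladder steps), so it is not a continuous functional of the rescaled walk. Moreover, $M-K$ is unbounded and heavy-tailed.

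The paper supplies this via Proposition~\ref{prop:KM}:
\begin{itemize}
\item Reading the walk backwards from $M$ gives a walk conditioned not to return to its start.
\item This walk is split into excursions between successive levels below the maximum.
\item By Lemma~\ref{lemma:hittingdistr}, the number of excursions until two consecutive trivial ones has an exponential tail, and each excursion has conditional mean length $O(\sqrt n)$.
\item Hence $\E(M-K)=O(\sqrt n)$ and $\P(U\in[K,M))=O(n^{-1/2})$.
\end{itemize}
Then $\P(E_n=0)=\P(U<M)-\P(U\in[K,M))$, and your scaling argument applied to $\P(U<M)$ finishes the proof.
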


The paper is organized as follows.
We characterize the possible initial positions of the $1$ in a string which result in a positive excess in Section \ref{s:switching}
by providing a structural understanding of the evolution of heights.
The main results are proved in Section \ref{s:asympexcess} using hitting time bounds for random walks.
We prove these bounds in Section \ref{s:hitting}.

\paragraph{Acknowledgments.}
The work of B.\ A.\ M\'esz\'aros was supported by the NKFI (National Research, Development and Innovation Office)
grant FK142124.
The work of B.\ Vet\H o was supported by the NKFI grants ADVANCED 150474 and KKP144059
``Fractal geometry and applications''.

\section{Switching position for the excess}
\label{s:switching}

The proofs of Theorems \ref{thm:fixp} and \ref{thm:crit} are based on understanding
how the initial position of the element $1$ determines the excess.
In particular, in this section, we give a structural understanding of the evolution of local maxima
which leads to a characterization of the initial positions of $1$ with excess.

For a string $\omega$ with a unique $1$, the maximal prefix of $2$s and the maximal suffix of $0$s are
\begin{equation}\begin{aligned}
L(\omega)&=\max\{k:(\Pi_1\omega)_1=(\Pi_1\omega)_2=\dots=(\Pi_1\omega)_k=2\},\\
R(\omega)&=\max\{k:(\Pi_1\omega)_n=(\Pi_1\omega)_{n-1}=\dots=(\Pi_1\omega)_{n-k+1}=0\}.
\end{aligned}\end{equation}
The height function of a string with two types consists of $\pm1$ steps.
For a string $\omega\in\Omega_n$ with three types, we consider the height function corresponding to the projection $\Pi_1(\omega)$ given by
\begin{equation}
S_k=\sum_{i=1}^k(1-(\Pi_1\omega)_i)
\end{equation}
for $k=0,1,\dots,n$.

We introduce
\begin{equation}\label{defK}
K(\omega)=\max\{k\in\{L+2,\dots,n-R-1\}: S_l<S_k-1,l=L,\dots,k-2\}
\end{equation}
as the rightmost position of the string $\omega$ with the property that the height remains strictly below $S_K-1$ between $L$ and $K-2$ and we let $K=L+2$ if the set on the right-hand side of \eqref{defK} is empty.
See Figure \ref{fig:KM} for an illustration.
By definition \eqref{defK}, we have $\pi_1(\omega_{K-2})=\pi_1(\omega_{K-1})=\pi_1(\omega_K)=0$ and $\pi_1(\omega_{K+1})=2$ unless $K=L+2$.

\begin{figure}
\centering
\resizebox{0.99\textwidth}{!}{%
\begin{tikzpicture}[every node/.style={font=\small}]

\draw[->] (-0.5,0) -- (20.5,0) node[right] {$k$};
\draw[->] (0,-0.4) -- (0,5.8) node[above] {$S_k$};

\draw[very thick]
  (0,0)
  -- (1,1)
  -- (2,0)
  -- (3,1)
  -- (4,0)
  -- (5,1)
  -- (6,2)
  -- (7,3)    
  -- (8,2)
  -- (9,3)
  -- (10,2)
  -- (11,3)
  -- (12,4)   
  -- (13,3)
  -- (14,4)
  -- (15,3)
  -- (16,4)
  -- (17,5)   
  -- (18,4)
  -- (19,5)
  -- (20,4); 

\foreach \x/\y in {
  0/0,1/1,2/0,3/1,4/0,5/1,6/2,7/3,8/2,9/3,10/2,11/3,
  12/4,13/3,14/4,15/3,16/4,17/5,18/4,19/5,20/4}
  \fill (\x,\y) circle (2.2pt);

\draw[dashed] (7,0) -- (7,3);
\draw[dashed] (12,0) -- (12,4);
\draw[dashed] (17,0) -- (17,5);

\draw[dashed] (0,3) -- (7,3);
\draw[dashed] (0,4) -- (12,4);
\draw[dashed] (0,5) -- (17,5);

\node[left] at (0,5) {$S_M$};
\node[left] at (0,4) {$S_M-1$};
\node[left] at (0,3) {$S_M-2$};

\node[below] at (7,-0.15) {$K=M_2$};
\node[below] at (12,-0.15) {$M_1$};
\node[below] at (17,-0.15) {$M_0=M$};

\end{tikzpicture}%
}
\caption{An example illustrating the points $M_0=M$, $M_1$, and $K=M_2$.}
\label{fig:KM}
\end{figure}
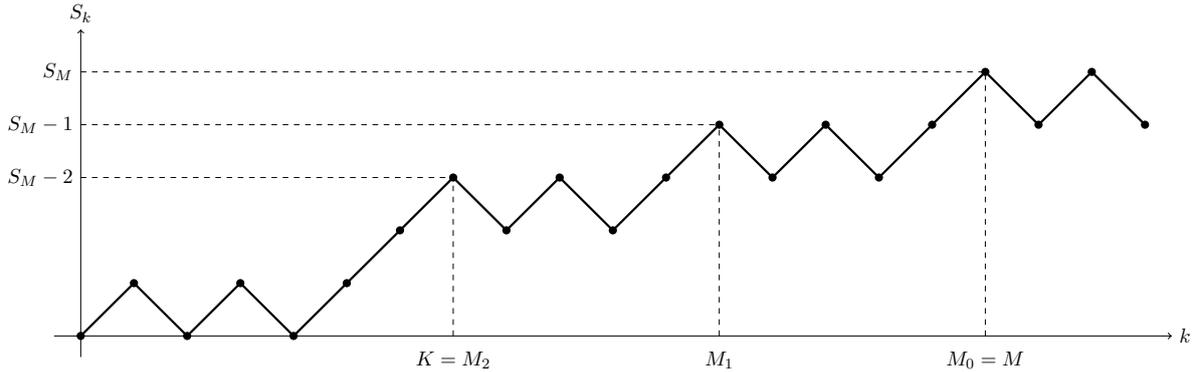

By the next result, $K$ is the \emph{switching position} for the positivity of the excess in the sense that
the excess $E_n$ is zero if and only if the initial position of the $1$ is before $K$.
In addition, the excess can be more than one only when the initial position of the $1$ is to the right of $n-R$, that is, there is no $2$ after it.

\begin{proposition}\label{prop:KU}
For any string $\omega$ with a single $1$ in it, we have the equality of events
\begin{equation}\label{KU}
\{U(\omega)<K(\omega)\}=\{E_n(\omega)=0\}.
\end{equation}
Furthermore,
\begin{equation}\label{Un-R}
\{E_n(\omega)>1\}\subseteq\{U(\omega)>n-R(\omega)\}.
\end{equation}
\end{proposition}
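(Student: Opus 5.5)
The plan is to reduce everything to the deterministic two-type dynamics of $\Pi_1\omega$, which by $\Pi_1Z=Z^{(2)}\Pi_1$ is not affected by the $1$, and then follow the $1$ as a tagged particle inside it. Write $\eta^t=(Z^{(2)})^t\Pi_1\omega$. Label the Π$_1$-particles (the $0$s and the $1$) from left to right as $1,2,\dots,m$. Under parallel update with $q=1$ particles never overtake each other, so particle $j$ has a well-defined trajectory $x_j(t)$ and a last-jump time $\tau_j$, and $T_n^{(2)}=\max_j\tau_j$. In the three-type dynamics the positions occupied by Π$_1$-particles at time $t$ are the same set, and the $1$ sits on one of them, say at rank $r_t$. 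The terminal string is $2\cdots210\cdots0$, so $T_n(\omega)$ is the first time after which $r_t=1$ and the Π$_1$-configuration is frozen. Hence $E_n=\max(0,\sigma-T_n^{(2)})$, where $\sigma$ is the time at which $r_t$ first reaches and keeps the value $1$.

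\textbf{Step 1: the rank dynamics.} I would check from the definition of $Z$ (including the priority rule \eqref{priority}) that $r_{t+1}=r_t-1$ exactly when the $1$ has a $0$ immediately to its left and a non-$2$ (or the boundary) immediately to its right. In all other cases $r_{t+1}=r_t$: a $2$ on the right makes the $1$ jump as an ordinary particle and its rank does not change. So the $1$ climbs one rank per step through any run of consecutive Π$_1$-particles. Its descent can only be interrupted at times when it stands at the right end of a particle run with a hole immediately after it.

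\textbf{Step 2: the last-jump times via the height function.} I would describe $\tau_j$ through $S$. A step of $Z^{(2)}$ turns every local maximum of the height profile into a local minimum. From this, the standard argument (as in \cite{FNN15}) gives $\tau_j$ as the length of a queue: particle $j$ finishes when all holes to its right have passed it, and it is delayed by one step each time it waits behind particle $j-1$. This yields a formula of the type $\tau_j=(\text{number of holes right of }j)+\max(\text{excursion of }S\text{ above a level})$. In particular $T_n^{(2)}$ is attained at the particles sitting at the high points $M_0=M, M_1, M_2=K$ of Figure \ref{fig:KM}. The running-maximum structure relative to $S_k-1$ in \eqref{defK} should then say the following. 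For $U<K$, the $1$ starts inside a block whose particles are released strictly before the critical particle at $K$. It therefore reaches rank $1$ by time $T_n^{(2)}$, because every step on which it is blocked is matched by a step on which particle $1$ itself has not yet finished. For $U\ge K$, the $1$ is to the right of (or equal to) the particle realizing the maximum, and that particle is still jumping at time $T_n^{(2)}-1$. Hence the $1$ cannot have overtaken it, so $\sigma>T_n^{(2)}$. Together these give \eqref{KU}.

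\textbf{Step 3: the bound \eqref{Un-R}.} Assume $U\le n-R$, so some $2$ lies to the right of the $1$. I would show that at time $T_n^{(2)}-1$ all particles except possibly the last mover form a single $0$-run, and that the $1$ is already at rank at most $2$ within it. This should follow because the holes to the right of the $1$ keep pulling it forward in position while Step 1 lets it climb ranks whenever unblocked. One further left swap then suffices, giving $E_n\le1$. When $U>n-R$ no hole ever interacts with the $1$, and it must pass the whole block of $0$s to its left, so the excess can be large; this is consistent with the inclusion.

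\textbf{Main obstacle.} I expect the main difficulty to be Step 2, namely matching the blocking times of the $1$ exactly with the queue formula for $\tau_j$, so that the threshold lands precisely at $K$ with its strict inequality $S_l<S_k-1$. I would first verify the claim on the example in Figure \ref{fig:KM} and on small strings. I would then argue by induction on time using the local-max-to-local-min description of $Z^{(2)}$, tracking the quantity $\tau_1-\sigma_t$ as a potential that changes by at most one per step.
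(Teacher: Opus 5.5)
\textbf{Verdict: genuine gap.} Your reduction is sound. Step 1 correctly reads off from the definition of $Z$ and the priority rule that the rank of the $1$ is non-increasing. It drops exactly when the $1$ has a $0$ on its left and a $0$ or the boundary on its right, and $E_n=\max(0,\sigma-T_n^{(2)})$ is correct.

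\textbf{Step 2.} This step carries the whole content of \eqref{KU}, but it is only a heuristic, and several of its assertions are false. The last step of $Z^{(2)}$ is always $2^a\,0\,2\,0^b\mapsto 2^{a+1}0^{b+1}$. So only the leftmost particle moves at time $T_n^{(2)}-1$, and $T_n^{(2)}=\tau_1$; it is not attained at the particles at $M_0,M_1,K$.

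Consequently ``the particle realizing the maximum is still jumping at time $T_n^{(2)}-1$'' fails whenever that particle is not the leftmost one. For $\Pi_1\omega=0020$ one has $M=K=2$, the particle at position $2$ makes its only jump in the first step, and $T_n^{(2)}=2$. Also, ``the $1$ cannot have overtaken it'' is not an argument: the $1$ does overtake $0$s via $01\to10$, and for $U=K$ the $1$ is that particle. The ``matching'' claim for $U<K$ is likewise unsupported. Nothing in the sketch explains why the threshold sits exactly at $K$, with the strict inequality $S_l<S_k-1$.

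\textbf{The missing idea.} Track the markers $M_k$ (the leftmost positions at height $S_M-k$) rather than tagged particles.
\begin{itemize}
\item In the bulk they move left by one per step, passing through particles, so they are attached to none.
\item At the wall they move right by one.
\item $M_k$ annihilates when $M_k=L+1$ and $M_{k-1}=L+2$.
\end{itemize}
The $1$ cannot cross a marker except at an annihilation, where $012\to021$ carries it to the right of $M_{k-1}$. Since $K=M_m$ with $M_{m+1}=M_m-1$, two cases follow.
\begin{itemize}
\item If $U\le K-1$, the $1$ stays at or left of $M_{m+1}$, the second $0$ of the moving pattern $0002$. When this pattern reaches the wall and becomes $0020$, the $1$ is forced to position $L+1$.
\item If $U\ge K$, the $1$ is carried past every marker up to $M_0$. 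Hence it is not at $L+1$ after the final step.
\end{itemize}

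\textbf{Step 3.} This has the same defect: ``the holes keep pulling it forward'' must be replaced by a concrete invariant. Once the $1$ has swapped with the last $2$ (which must happen if $U\le n-R$), their distance stays in $\{1,2\}$:
\begin{itemize}
\item in $0\,2\,1$ the $2$ may move left while the $1$ is blocked;
\item in $2\,0\,1$ followed by a $0$ or the boundary, the $1$ always moves left.
\end{itemize}
The last jump of $Z^{(2)}$ is the last jump of this $2$. So at time $T_n^{(2)}$ the $1$ sits at $L+1$ or $L+2$, which gives $E_n\le 1$.

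Your rank bound at time $T_n^{(2)}-1$ can only hold relative to the final $0$-run. For instance, in $0012\mapsto 0021\mapsto 0201$ the $1$ has overall rank $3$ at that time.
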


Let $M_k$ denote the leftmost location with height that is $k$ less than the maximum of $S_k$ on $\{L+1,\dots,n-R-1\}$, that is,
\begin{equation}\label{defMk}
M_k(\omega)=\min\{j\in\{L+1,\dots,n-R-1\}:S_j=\max\{S_i:i\in\{L+1,\dots,n-R-1\}-k\}
\end{equation}
for $k=0,1,2,\dots$ which are well-defined as long as the set of indices $j$ above is not empty.
We mention that $M_0(\omega)$ is the position of the leftmost maximum of $S_k$ for $k\in\{L+1,\dots,n-R-1\}$ and we also denote it by $M=M_0$.
See Figure \ref{fig:KM}.
It follows from the definition that $\pi_1(\omega_{M_k})=\pi_1(\omega_{M_k-1})=0$ unless $M_k=L+1$
and that $S_j<S_{M_k}$ for $j=L,\dots,M_k-1$.

For a given $\omega$, we have $K(\omega)=M_m(\omega)$ for some integer $m$ by definition \eqref{defK} unless the set of indices $k$ is empty on the right-hand side of \eqref{defK}.
It is impossible to have $M_{k-1}(\omega)=M_k(\omega)+1$ for some $k=1,\dots,m$ because it would mean that $K(\omega)\ge M_{k-1}(\omega)$.
Hence the points $M_0(\omega),M_1(\omega),\dots,M_m(\omega)$ decompose the interval $[K,M]$ into blocks of integer lengths that are all at least $3$.

The idea of the proof is that for the given $\omega$,
we keep track of the point $M_k$ for $k=0,\dots,m$ under the evolution step $Z$ applied to $\omega$ repeatedly.
We introduce
\begin{equation}
\omega^{(j)}=Z^j\omega,\quad L^{(j)}=L(Z^{j}\omega),\quad R^{(j)}=R(Z^j\omega),\quad M_k^{(j)}=M_k(Z^j\omega)
\end{equation}
and we let $U^{(j)}$ denote the position of $1$ in $\omega^{(j)}$.
Evolution occurs in three phases.
The first phase is the \emph{bulk bahaviour} of $M_k$.
For any $k=0,\dots,m$, we have
\begin{equation}\label{Mkbulk}
M_k^{(j+1)}=M_k^{(j)}-1
\end{equation}
if $M_k^{(j)}\ge L^{(j)}+2$.
Since $M_{k-1}^{(j)}\neq M_k^{(j)}+1$, $M_k^{(j)}$ is a local maximum of the height,
that is, $\pi_1(\omega_{M_k^{(j)}})=0$ and $\pi_1(\omega_{M_k^{(j)}+1})=2$.
On the other hand, all local maxima in $[L^{(j)},M_k^{(j)}-1]$ have heights  lower than $S_{M_k^{(j)}}$ by \eqref{defMk}.
Applying the evolution step $\omega\mapsto Z\omega$ decreases all the local maxima of the height,
resulting in that $M_k^{(j)}-1$ becomes a new leftmost local maximum with value $S_{M_0^{(j+1)}}-k$, that is, \eqref{Mkbulk} holds.

The second phase is the \emph{edge bahaviour} which can happen in later steps of the evolution.
If $M_k^{(j)}=L^{(j)}+1$ and $M_{k-1}^{(j)}>L^{(j)}+2$ for some $j=1,2,\dots$, then we have
\begin{equation}\label{Mkedge}
M_k^{(j+1)}=M_k^{(j)}+1.
\end{equation}
Indeed, if $M_k^{(j)}$ is a leftmost local maximum with value $S_{M_0^{(j)}}-k$
then $\pi_1((\omega^{(j)})_{M_k^{(j)}})=0$ and $\pi_1((\omega^{(j)})_{M_k^{(j)}+1})=2$ hold.
The pair $02$ is transformed into $20$, so the height at $M_k^{(j)}\pm1$ becomes $S_{M_0^{(j+1)}}-k=S_{M_0^{(j)}}-k-1$
but $M_k^{(j)}-1=L^{(j+1)}$ which proves \eqref{Mkedge}.

The final phase is the \emph{annihilation}.
If $M_k^{(j)}=L^{(j)}+1$ and $M_{k-1}^{(j)}=L^{(j)}+2$
then we necessarily have $\pi_1((\omega^{(j)})_{L^{(j)}+1})=\pi_1((\omega^{(j)})_{L^{(j)}+2})=0$ and $\pi_1((\omega^{(j)})_{L^{(j)}+3})=2$
since $M_{k-2}$ can only be in the bulk behaviour phase.
This projected pattern $002$ after the left block of $2$s is updated in the next step to $020$, resulting in having
\begin{equation}
M_{k-1}^{(j+1)}=M_{k-1}^{(j)}-1
\end{equation}
and in the disappearance of $M_k$.

\begin{proof}[Proof of Proposition \ref{prop:KU}]
First, we give the proof of \eqref{KU} for the case when $K(\omega)>L(\omega)+2$.
We assume that $U(\omega)<K(\omega)$.
As seen above, $K(\omega)=M_m(\omega)$ and we also have that $M_{m+1}(\omega)=K(\omega)-1$.
Furthermore, applying the evolution step $Z$ repeatedly, $M_{m+1}$ follows the bulk behaviour described above,
that is, goes to the left by one in every step as long as $M_{m+1}^{(j)}=M_m^{(j)}-1\ge L^{(j)}+2$.
If this condition holds then we have
$\pi_1((\omega^{(j)})_{M_m^{(j)}-2})=\pi_1((\omega^{(j)})_{M_m^{(j)}-1})=\pi_1((\omega^{(j)})_{M_m^{(j)}})=0$
and $\pi_1((\omega^{(j)})_{M_m^{(j)}+1})=2$.
It corresponds to a projected string $0002$ that moves to the left by one in every step during the bulk behaviour of $M_{m+1}$.
If $U(\omega)\le M_m(\omega)-1=K(\omega)-1$ holds for the initial position of the $1$
then its later position cannot exceed that of the second $0$ of the moving string $0002$ during the bulk phase,
that is, $U^{(j)}\le M_{m+1}^{(j)}$ as long as $M_{m+1}^{(j)}\ge L^{(j)}+2$.

The last step of the bulk behaviour of $M_{m+1}$ happens if $M_{m+1}^{(j)}=L^{(j)}+2$
when the $0002$ string meets the initial string of $2$s of length $L^{(j)}$.
The number of initial $2$s remains the same in this step, the string $0002$ transforms into $0020$
and $M_{m+1}$ annihilates immediately after the end of its bulk behaviour.
If $U(\omega)\le K(\omega)-1$ holds initially then the only possible position of the $1$ after $j+1$ steps is at $L^{(j+1)}+1$,
that is, the string $0020$ must be the projection of $1020$.
Since the $1$ is to the left of all $0$s in $\omega^{(j+1)}$,
after the sorting of the projected string $\Pi_1(\omega^{(j+1)})$, the sorting of $\omega^{(j+1)}$ will also have been completed,
which means that $E_n(\omega)=0$.

Next, we assume that $U(\omega)\ge K(\omega)$.
We show that if $U(\omega)>M_k(\omega)$ for some $k=1,\dots,m$ then the $1$ remains on the right of $M_k$ until the annihilation of $M_k$.
During the bulk behaviour of $M_k$, the $1$ cannot move to the left of $M_k$ trivially.
Assume that $M_k$ arrives at the edge after $j$ steps.
Then we have $M_k^{(j)}=L^{(j)}+1$ and $\pi_1((\omega^{(j)})_{M_k^{(j)}})=0$
but $(\omega^{(j)})_{M_k^{(j)}}=1$ is impossible if $U(\omega)>M_k(\omega)$.
In further steps, the string $02$ at positions $M_k,M_k+1$ transforms into $20$, hence $M_k$ moves to the right by one according to \eqref{Mkedge} and
$(\omega^{(j+1)})_{M_k^{(j+1)}}=0$ also holds.
This means that the $1$ remains on the right of $M_k$ during the edge phase.

The annihilation of $M_k$ happens when $M_k^{(j)}=M_{k-1}^{(j)}-1=L^{(j)}+1$ which corresponds to
$\pi_1((\omega^{(j)})_{L^{(j)}+1})=\pi_1((\omega^{(j)})_{L^{(j)}+2})=0$ and $\pi_1((\omega^{(j)})_{L^{(j)}+3})=2$,
and the projected string $002$ is turned into $020$.
If the $1$ was between $M_k$ and $M_{k-1}$, that is, $U^{(j)}=M_{k-1}^{(j)}$
then the string $012$ at positions $L^{(j)}+1$ to $L^{(j)}+3$ is transformed into $021$.
This means that the $1$ is transferred to the right of $M_{k-1}$ and $U^{(j+1)}=M_{k-1}^{(j+1)}+2=L^{(j+1)}+3$.

If $U(\omega)=K(\omega)$ then $M_{m+1}(\omega)=K(\omega)-1$ which means $U(\omega)=M_m(\omega)$ for the position of $1$
which remains true during the bulk phase of $M_{m+1}$.
The end of the bulk phase means that $U^{(j)}=M_m^{(j)}=M_{m+1}^{(j)}+1=L^{(j)}+3$ for some $j$
and the corresponding string is $0012$ after the initial string of $2$s of length $L^{(j)}$.
In the next step, $M_{m+1}$ annihilates, the string transforms into $0021$ and we have $M_m^{(j+1)}=M_m^{(j)}-1=L^{(j)}+2$ and $U^{(j+1)}=L^{(j)}+4$,
that is, the $1$ is transferred to the right of $M_m$.

By induction, if $U(\omega)\ge K(\omega)$ then the $1$ is transferred to the right of $M_0^{(j)}$ for some $j$.
From then on, the $1$ remains on the right of $M_0$ during its bulk and edge phases as seen above.
The last step of the edge phase of $M_0$ occurs when $M_0^{(j)}=L^{(j)}+1=n-R^{(j)}-1$ for some $j$
after which $\Pi_1(\omega^{(j+1)})$ becomes sorted.
Since the $1$ was to the right of $M_0^{(j)}$, it cannot be at position $L^{(j+1)}+1$ in $\omega^{(j+1)}$ which means that $E_n(\omega)\ge1$.
This completes the proof of \eqref{KU} for the case $K(\omega)>L(\omega)+2$.

If $K(\omega)=L(\omega)+2$, then $U(\omega)<K(\omega)$ necessarily implies that $1$ is to the left of all $0$s already in the initial string $\omega$ and that we have $E_n(\omega)=0$.
In the case $U(\omega)\ge K(\omega)$, we have $M_k(\omega)=L(\omega)+1$ for some integer $k$ and the $1$ is to the right of $M_k$ which results in $E_n(\omega)\ge1$ by the above argument.

Finally, we prove \eqref{Un-R} by showing that $U(\omega)\le n-R(\omega)$ implies $E_n(\omega)\le1$.
If $U(\omega)\le n-R(\omega)$ holds then the last $2$ at position $n-R(\omega)$ initially is swapped with the $1$ in some step $j$,
that is, the string $\omega^{(j)}$ ends with $21$ and a string of $0$s of length $R^{(j)}$.
In any further evolution step after this, the last $2$ can only move to the left by swapping with a $0$.
If this happens then the $1$ is swapped with the same $0$ in the next step.
The distance of the last $2$ and the $1$ cannot exceed $2$,
hence after the last swap of the last $2$, at most one more sorting step is needed, which yields $E_n(\omega)\le1$.
\end{proof}

\section{Limiting excess distribution}
\label{s:asympexcess}

In this section, we prove our main results stated in Theorems \ref{thm:fixp} and \ref{thm:crit}.
The proofs are based on computing the expectation of the switching position $K$.
The key observation is that the difference of the switching position $K$ and the position of the leftmost maximum $M$ can be controlled
in the sense that their expected distance can be upper bounded as in Proposition \ref{prop:KM} below.
The proof of Proposition \ref{prop:KM} is postponed to Section \ref{s:hitting}.

\begin{proposition}\label{prop:KM}
\begin{enumerate}
\item
For $p=1/2-\lambda/(2\sqrt n)$, there is a constant $c$ that depends only on $\lambda$ such that for all $n$ almost surely
\begin{equation}\label{KM}
\E\left(M-K\bigm|M\right)\le c\sqrt{n}.
\end{equation}
\item
For $p<1/2$ fixed, there is a deterministic constant $c$ such that $\E\left(M-K\bigm|M\right)\le c$ almost surely for all $n$.
\end{enumerate}
\end{proposition}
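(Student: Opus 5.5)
Our approach is to work with the height function read backwards from the leftmost maximum $M$, and to reduce $M-K$ to a sum of gaps between successive ladder points of a random walk with negative drift. The reduction goes as follows. Set $W_i=S_M-S_{M-i}$ for $i=0,1,\dots,M-L$. Then $W$ is a walk with steps $\pm1$, and by the definition of $M$ as the leftmost maximum we have $W_i>0$ for $i\ge1$. We will condition on $M$ (and on $L$, which is harmless since the bounds are to be uniform). Given this, the increments of $W$ are i.i.d.\ $\pm1$ steps with up-probability $1-p$, conditioned on the walk staying strictly positive up to time $M-L$ and on the prefix structure encoded by $L$. The increments to the right of $M$ are independent of the increments to the left of $M$ given $M$, so only the left piece matters.

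In terms of $W$, the point $M_k$ is $M-\tau_k$, where $\tau_k$ is the first time $W$ hits level $k$. The switching position $K=M_m$ is the first index $k$ with $\tau_{k+1}=\tau_k+1$, that is, the first level $k\ge1$ at which the walk, having reached $k$, steps immediately up to $k+1$. (A detail to check here is the off-by-one between the condition $S_l<S_k-1$ and the ladder structure.) Hence $M-K=\tau_m$, where $m$ is the first index such that the excursion from level $m$ to level $m+1$ has length exactly one. If this never happens before $L$ is reached, then $K=L+2$ and $M-K\le M-L$, which must also be controlled.

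\textbf{Case $p<1/2$ fixed.} Here $W$ has positive drift $1-2p$ upward. Each passage from level $k$ to $k+1$ (for the unconditioned walk) has i.i.d.\ length $\sigma_k$ with finite mean. Also $\P(\sigma_k=1)=1-p>0$, independently across $k$, so $m$ is stochastically dominated by a geometric variable and, by Wald's identity, $\E\tau_m\le C$. The conditioning on positivity is an event of probability bounded below, namely the probability that a positively drifted walk never returns to $0$. A Doob $h$-transform argument shows that the conditioned walk still has ladder gaps with uniformly bounded mean and still has probability bounded away from zero of an immediate up-step at each new level. Truncation at $L$ only shortens $M-K$. This gives a deterministic constant.

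\textbf{Case $p=1/2-\lambda/(2\sqrt n)$.} Now the walk has drift of order $n^{-1/2}$ over a horizon of at most $n$, and is conditioned to stay positive. Given the history up to $\tau_k$, the probability that $\tau_{k+1}=\tau_k+1$ is at least a constant $c_0>0$ (near $1/2$, or larger under the positivity conditioning, by comparing $h$-transform ratios $h(k+1)/h(k)\ge$ const). Therefore $m$ is dominated by a geometric variable. We then need $\E\tau_j$ to be at most $c\sqrt n$ uniformly for $j$ bounded; more precisely, we need $\E\tau_m\le\sum_j \P(m\ge j)\,\E[\tau_j-\tau_{j-1}\mid m\ge j]$. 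For a walk conditioned to stay positive (a Bessel-3-like process), the hitting time of level $j$ has mean comparable to $j^2$ plus corrections, but the horizon cap $M-L\le n$ and the heavy tail of excursion lengths mean that a single gap can be of order $n$ with probability about $n^{-1/2}$. So each gap has mean $O(\sqrt n)$, via $\E[\sigma\wedge n]\asymp\sqrt n$ for a near-critical walk. Summing over the geometric number of levels gives $c\sqrt n$.

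The main obstacle is making the comparison with the conditioned walk rigorous uniformly in $M$. Conditioning on $M$ imposes positivity on exactly $M-L$ steps together with the behaviour at $L$, and this is not a clean Markov $h$-transform because the horizon is finite. My first attempt would be a finite-horizon $h$-transform with $h_t(x)=\P_x(\text{stay positive for } t \text{ steps})$. I would use ballot-type and monotonicity estimates, namely that $h_t(x+1)/h_t(x)\ge 1$ and is bounded, to show that the immediate-up probability is bounded below. I would use the same estimates to show that the conditioned gap lengths are stochastically dominated by unconditioned truncated gap lengths, up to a constant factor.
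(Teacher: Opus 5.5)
There is a genuine gap at the first step: the identification of $M_k$ in terms of $W$ is wrong. $M_k$ is the \emph{leftmost} position of height $S_M-k$, and $W_i=S_M-S_{M-i}$ reads the string backwards from $M$. Hence $M_k=M-\ell_k$, where $\ell_k$ is the \emph{last} visit of $W$ to level $k$ before time $M-L$, not the first hitting time $\tau_k$.

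With first hitting times your criterion is vacuous. Positivity forces $W_1=1$ and $W_2=2$ whenever $M\ge L+2$, so $\tau_2=\tau_1+1$ always, and your formula would give $M-K=1$ identically. The paper's Figure 1 shows this is false. There $W=0,1,2,1,2,1,2,3,2,3,2,3,4,5,\dots$ and the last visits are $\ell_1=5$, $\ell_2=10$, $\ell_3=11$. Indeed $K=M_2=7$ and $M-K=10$. So the quantity your argument controls is not $M-K$. Your one-step estimate via $h_t(x+1)/h_t(x)$ bounds the probability of the wrong event.

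With last exits, your verbal description becomes correct: $K=M_m$ with $m$ the first level such that $\ell_{m+1}=\ell_m+1$. But then the event that a level qualifies reads: the walk steps up and never returns to the new level during the whole remaining horizon up to $M-L$. This depends on the entire future of the path, so the $\ell_k$ are not stopping times. Its conditional probability is not a local $h$-ratio but a ratio of finite-horizon survival probabilities.

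That is the missing ingredient, and the paper supplies it as follows:
\begin{itemize}
\item It encodes the path by excursion starts $\tau_k$ and indicators $I_k$ that the walk after $\tau_k$ stays above $W_{\tau_k}-1$ up to $M-L$.
\item It writes $M-K+1$ as the total length of excursions until two consecutive $I_k$ equal $1$.
\item It controls the time-inhomogeneous transitions of $(\tau_k,I_k)$ through the hitting estimates $c_1<\P_2(V_1<m\mid V_0>m)<1-c_1$ and $\E_2(V_1\ind_{\{V_1\le m\}}\mid V_0>m)\le c_2\sqrt m$, uniformly in $m\le n$, with bounded analogues for fixed $p<1/2$.
\item It finishes with a Wald-type summation.
\end{itemize}
Your heuristic of a geometric number of levels with $O(\sqrt n)$ per gap matches that outcome. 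However, it must be proved for these last-exit excursions under the finite-horizon conditioning, and that is where the substance of the proof lies.
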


\begin{proposition}\label{prop:EM}
For $p>1/2$, we have that
\begin{equation}\label{EM}
\lim_{n\to\infty}\frac{\E(M)}n=0.
\end{equation}
\end{proposition}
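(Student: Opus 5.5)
The plan is to show that $M$ is in fact tight: I will bound $\P(M>m)$ by a quantity that decays exponentially in $m$ and does not depend on $n$. Summing over $m$ then gives $\sup_n\E(M)<\infty$, which is much stronger than \eqref{EM}. Write $\delta=2p-1>0$. Under $\Pi_1$, each letter is $2$ with probability $p$ and $0$ with probability $1-p$, independently. So $S_k$ is a simple random walk with i.i.d.\ $\pm1$ increments and drift $1-2p=-\delta<0$.

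\emph{Reduction.} If the index set $\{L+1,\dots,n-R-1\}$ is empty, $M$ is given by some convention; whatever it is, $M\le n$ always. If $M>m$, then by definition of the leftmost maximum there is some $j\in\{m+1,\dots,n\}$ with $S_j\ge S_{L+1}$. Moreover $S_{L+1}=-L+1$ when $L<n$, because the first $L$ letters are $2$s and letter $L+1$ projects to $0$. Hence
\begin{equation*}
\{M>m\}\subseteq\{L\ge \delta m/2\}\cup\bigcup_{j>m}\{S_j\ge -\delta j/2+1\},
\end{equation*}
up to the degenerate event $\{L=n\}$. That event is contained in $\{L\ge\delta m/2\}$ whenever $m\le n$, and for $m\ge n$ the bound $\P(M>m)=0$ is trivial. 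The inclusion holds because if $L<\delta m/2$ and $j>m$, then $S_j\ge -L+1$ forces $S_j>-\delta j/2+1$.

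\emph{Estimates.} First, $\P(L\ge \delta m/2)\le p^{\lfloor\delta m/2\rfloor}$, since $L\ge k$ requires the first $k$ letters to be $2$. Second, since $\E S_j=-\delta j$, a Chernoff (Hoeffding) bound gives
\begin{equation*}
\P(S_j\ge -\delta j/2+1)\le C e^{-c j}
\end{equation*}
with $c=\delta^2/8$ up to the harmless shift by $1$. A union bound over $j>m$ then gives $\P(M>m)\le p^{\lfloor\delta m/2\rfloor}+C'e^{-cm}$, uniformly in $n$.

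\emph{Conclusion.} Therefore
\begin{equation*}
\E(M)=\sum_{m\ge0}\P(M>m)\le C''<\infty
\end{equation*}
independently of $n$, and dividing by $n$ gives \eqref{EM}. The only step needing care is the reduction. The restriction of the maximum to the window $\{L+1,\dots,n-R-1\}$ is harmless here: it removes the final rise of $S$ over the suffix of $R$ zeros, and that rise can only push the unrestricted maximum to the right. The key observation is that the starting height $S_{L+1}=-L+1$ is itself controlled by the geometric tail of $L$. Once that is in place, the argument is a routine large-deviation bound for a walk with negative drift.
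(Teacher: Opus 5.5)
Your proof is correct, and it takes a different, more quantitative route than the paper's.

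The paper's argument runs as follows:
\begin{itemize}
\item It uses the central limit theorem at the single time $\varepsilon\sqrt n$ to place $S_{\varepsilon\sqrt n}$ below $m_n/2$ with high probability.
\item It bounds the chance of climbing back above $0$ afterwards by the gambler's-ruin factor $a^{|m_n|/2}$.
\item It deduces that the argmax is $o(\sqrt n)$ in probability, and transfers this to $M$ because $L$ is geometric and independent of $n$.
\item Only at the end does it use $M\le n$ and bounded convergence to conclude $\E(M)/n\to0$.
\end{itemize}

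You proceed differently in two places. First, you compare $S_M$ directly with the explicit starting height $S_{L+1}=1-L$, which handles the random starting point and the window $\{L+1,\dots,n-R-1\}$ in one step. Second, you replace the CLT-plus-ruin step by a Hoeffding bound summed over all $j>m$.

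What this buys you is an exponential tail for $M$ that is uniform in $n$. Hence $\sup_n\E(M)<\infty$, which is far stronger than $\E(M)=o(n)$ and stronger than the intermediate $o(\sqrt n)$ in probability. The cost is that your constants (e.g.\ $c=\delta^2/8$) degenerate as $p\downarrow 1/2$. That is unavoidable: in the critical regime $M$ is genuinely of order $n$, and there the paper's weak-convergence viewpoint is what is needed.

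One small inaccuracy: the window $\{L+1,\dots,n-R-1\}$ is empty exactly when $\Pi_1\omega$ is already sorted, i.e.\ of the form $2^L0^{n-L}$, not only on $\{L=n\}$. This does no harm. If $L<\delta m/2$ and $m<n$, such a string has $S_n=n-2L\ge 1\ge 1-\delta n/2$. So your inclusion holds with $j=n$ whatever convention defines $M$ there. Alternatively, this event has probability at most $p^{n+1}/(2p-1)$.
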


\begin{proof}[Proof of Proposition \ref{prop:EM}]
The regime $p>1/2$ is where the drift of the random walk $S_k$ is $1-2p<0$ negative.
We first show that if $S_0=0$ then for any $\varepsilon>0$, we have that
\begin{equation}\label{maxSn}
\lim_{n\to\infty}\P_0\left(\max_{k\ge\varepsilon\sqrt n}S_k>0\right)=0.
\end{equation}
where the subindex $0$ means conditioning on $\{S_0=0\}$.
Since $m_n=\E_0(S_{\varepsilon\sqrt n})=(1-2p)\varepsilon\sqrt n<0$,
the central limit theorem implies that the fluctuations of $S_{\varepsilon\sqrt n}$ around its mean $m_n$ are of order $n^{1/4}$.
In particular,
\begin{equation}\label{SnCLT}
\lim_{n\to\infty}\P_0\left(S_{\varepsilon\sqrt{n}}>\frac{m_n}2\right)=0.
\end{equation}
By conditioning on the value of $S_{\varepsilon\sqrt n}$, we can write
\begin{equation}\label{maxcond}\begin{aligned}
\P_0\left(\max_{k\ge\varepsilon\sqrt n}S_k>0\right)
&=\P_0\left(\max_{k\ge\varepsilon\sqrt n}S_k>0\Bigm|S_{\varepsilon\sqrt n}>\frac{m_n}2\right)
\P_0\left(S_{\varepsilon\sqrt n}>\frac{m_n}2\right)\\
&\qquad+\P_0\left(\max_{k\ge\varepsilon\sqrt n}S_k>0\Bigm|S_{\varepsilon\sqrt n}<\frac{m_n}2\right)
\P_0\left(S_{\varepsilon\sqrt n}<\frac{m_n}2\right)
\end{aligned}\end{equation}
The first product of probabilities on the right-hand side of \eqref{maxcond} goes to $0$ by \eqref{SnCLT}.
In the second product, the conditional probability can be upper bounded by $a^{|m_n|/2}$
where $a=\P_0(\exists k: S_k=1)<1$ since the drift of $S_k$ is negative.
This proves \eqref{maxSn}.

The limit in \eqref{maxSn} means that
\begin{equation}\label{argmaxto0}
\frac{\argmax_{k\in\{1,\dots,n\}}S_k}{\sqrt{n}}\stackrel\P\to0
\end{equation}
as $n\to\infty$ in probability conditionally given that $S_0=0$.
The argmax may not be unique for the random walk $S_m$ but \eqref{argmaxto0} holds for any of them.
The value of $M$ is equal to the position of the leftmost maximum of the random walk started at $L+1$.
The variable $L$ has a geometric distribution with parameter $1-p$ and in particular it does not depend on $n$.
Hence
\begin{equation}\label{Msqrtn}
\frac M{\sqrt n}\stackrel{\P}\to0
\end{equation}
also holds as $n\to\infty$ in probability.
Since $M\le n$ almost surely, \eqref{Msqrtn} implies \eqref{EM}.
\end{proof}

\begin{proof}[Proof of Theorem \ref{thm:fixp}]
Let first $p>1/2$ be fixed.
By Proposition \ref{prop:KU}, the asymptotic excess distribution can be obtained from the probability
\begin{equation}\label{UKUMbound}
\P(U<K)\le\P(U<M)
\end{equation}
where the upper bound follows because $K\le M$ holds almost surely.

We compute $\P(U<M)$ as follows.
The number of $0$s in the first $m$ bits of the string is equal to $(m+S_m)/2$.
Since $U$ is a uniformly chosen position among the $0$s, for any fixed $m$, we have that
\begin{equation}\label{Um}
\P\left(U<m\bigm|\Pi_1\omega\right)=\frac{m+S_m}{n+S_n}.
\end{equation}
The fluctuations of the random walk $S_m$ with drift $1-2p$ around its mean are of order $\sqrt n$.
More precisely, for any $\varepsilon>0$ there is a $C$ large enough so that
\begin{equation}\label{RWfluct}
\P\left(\max_{m\in\{1,\dots,n\}}|S_m-m(1-2p)|\le C\sqrt n\right)>1-\varepsilon
\end{equation}
for all $n$.
By conditioning, we can write
\begin{equation}\label{U<M}
\P(U<M)=\E\left(\P\left(U<M\bigm|\Pi_1\omega\right)\right)=\E\left(\frac{M+S_M}{n+S_n}\right).
\end{equation}
By \eqref{RWfluct}, the denominator in the expectation on the right-hand side of \eqref{U<M}
is asymptotically equal to $2(1-p)n$ with a correction of order $\sqrt n$ with probability at least $1-\varepsilon$.
On the same event with probability at least $1-\varepsilon$, the numerator is $2(1-p)M$ with a correction of order $\sqrt n$.
Then Proposition \ref{prop:EM} implies that the right-hand side of \eqref{U<M} goes to $0$ as $n\to\infty$.
This together with Proposition \ref{prop:KU} and the bound \eqref{UKUMbound} show that $\lim_{n\to\infty}\P(E_n=0)=0$.
Since $R$ has geometric distribution with parameter $p$ which does not depend on $n$, we also have $\lim_{n\to\infty}\P(E_n>1)=0$
which concludes the proof for $p>1/2$.

For $p<1/2$ fixed, we argue by time reversal and we first show that
\begin{equation}
\lim_{n\to\infty}\frac{\E(M)}n=1.
\end{equation}
It does not directly follow from Proposition \ref{prop:EM} since $M$ is the position of the leftmost maximum
but it is clear from the proof that the statement holds for the position of any maximum.
Then the same argument applies as for $p>1/2$ and in particular \eqref{RWfluct} and \eqref{U<M} hold.
These together show that $\lim_{n\to\infty}\P(U<M)=1$.

By Proposition \ref{prop:KU},
\begin{equation}\label{En0}
\P(E_n=0)=\P(U<K)=\P(U<M)-\P(U\in[K,M))
\end{equation}
where we used in the last equality that $K\le M$ always holds.
By the second part of Proposition \ref{prop:KM}, the expected difference $\E(M-K)$ remains bounded in $n$
hence $\P(U\in[K,M))$ goes to $0$ as $n\to\infty$ which proves that $\lim_{n\to\infty}\P(E_n=0)=1$.

The statement for $p=1/2$ is a special case of Theorem \ref{thm:crit} which is shown below.
\end{proof}

\begin{proof}[Proof of Theorem \ref{thm:crit}]
Let $p=1/2-\lambda/(2\sqrt n)$ with $\lambda\in\R$ fixed.
By Proposition \ref{prop:KU}, the asymptotic excess distribution can be obtained from the probability in \eqref{En0}.
By Proposition \ref{prop:KM}, the expected difference of $K$ and $M$ is of order $\sqrt n$,
hence the probability $\P(U\in[K,M))$ on the right-hand side of \eqref{En0} goes to $0$ as $1/\sqrt n$ in the $n\to\infty$ limit.

We compute $\P(U<M)$ as in \eqref{Um}.
By the fluctuation result \eqref{RWfluct},
the denominator in the expectation on the right-hand side of \eqref{U<M}
is asymptotically equal to $n$ with fluctuations of order $\sqrt n$ with high probability.
The numerator in the same formula is $M$ with a correction of order $\sqrt n$ with high probability.
Hence by boundedness, we have
\begin{equation}
\P(U<M)\sim\frac{\E(M)}n
\end{equation}
as $n\to\infty$.

Since Brownian motion with a drift has a unique argmax almost surely,
Donsker's invariance principle and the continuous mapping theorem together imply that the argmax $M$ of the random walk $S_m$
converges to that of the Brownian motion with drift $\lambda$.
\end{proof}

\section{Hitting time bounds}
\label{s:hitting}

This section is devoted to the proof of Proposition \ref{prop:KM} on the expected distance of $K$ and $M$.
Their expected distance can be bounded by reading the string $\omega$ from $M$ backwards conditionally given $M$.
and by applying hitting time estimates for random walks stated in Lemma \ref{lemma:hittingdistr} below.
Since $M$ is the position of the leftmost maximum,
the distribution of the remaining steps is that of a random walk conditioned not to visit its starting position again.
The trajectory of this walk can then be decomposed into excursions defined by the returns to different levels below the maximum.

To state our random walk estimates, we define $Y_n$ to be a simple random walk starting at $Y_0=0$ and with i.i.d.\ steps
$\P(Y_{n+1}-Y_n=-1)=p$ and $\P(Y_{n+1}-Y_n=1)=q=1-p$ where $p\in(0,1)$.
We let
\begin{equation}\label{defVk}
V_k=\min\{n=0,1,\ldots:Y_n=k\}
\end{equation}
the time of the first visit at level $k$ for any $k\in\Z$.

\begin{lemma}\label{lemma:hittingdistr}
\begin{enumerate}
\item
In the symmetric case $p=1/2$, we have that
\begin{align}
\P_2\left(V_1<m\bigm|V_0>m\right)&\to\frac12\label{hittingconddistr0}\\
\E_2\left(V_1\mathbbm{1}_{\{V_1\leq m\}}\bigm|V_0>m\right)&\sim\sqrt{\frac{\pi}{2}}\sqrt{m}\label{expectedhitting0}
\end{align}
as $m\to\infty$.
\item
If $p=\frac12\left(1-\frac\lambda{\sqrt n}\right)$ then there are constants $c_1>0$ and $c_2$ finite which depend on $\lambda$ such that
\begin{align}
c_1<\P_2\left(V_1<m\bigm|V_0>m\right)&<1-c_1\label{hittingconddistrlambda}\\
\E_2\left(V_1\mathbbm{1}_{\{V_1\leq m\}}\bigm|V_0>m\right)&\le c_2\sqrt{m}\label{expectedhittinglambda}
\end{align}
holds for all $m\le n$.
\end{enumerate}
\end{lemma}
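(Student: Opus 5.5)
The plan is to derive both parts from a first-passage decomposition at level $1$ and the classical asymptotics of the symmetric walk, and then to obtain the drifted case by a change of measure.

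\textbf{Decomposition.} By the strong Markov property at $V_1$, and since a walk started at $2$ cannot reach $0$ without first visiting $1$,
\begin{equation*}
\P_2\left(V_1\le m,\,V_0>m\right)=\sum_{t\le m}f(t)\,G(m-t),\qquad
\E_2\left(V_1\ind_{\{V_1\le m\}}\ind_{\{V_0>m\}}\right)=\sum_{t\le m}t\,f(t)\,G(m-t).
\end{equation*}
Here $f(t)=\P_2(V_1=t)=\P_1(V_0=t)$ and $G(s)=\P_1(V_0>s)$. Both quantities are then divided by $\P_2(V_0>m)$.

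\textbf{Symmetric case.} For $p=1/2$, I would use the reflection principle and the ballot theorem to get $f(t)\asymp t^{-3/2}$. More precisely, $f(t)\sim 2(2\pi)^{-1/2}t^{-3/2}$ for odd $t$ and $f(t)=0$ for even $t$. I would also use $G(s)\sim\sqrt{2/(\pi s)}$ and $\P_2(V_0>m)\sim 2\sqrt{2/(\pi m)}$.

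For \eqref{hittingconddistr0}, split the sum $\sum_t f(t)G(m-t)$ into three ranges.
\begin{itemize}
\item For $t\le \varepsilon m$, we have $G(m-t)=G(m)(1+O(\varepsilon))$ and $\sum_t f(t)=1$, so this range contributes $\sim G(m)$.
\item For $\varepsilon m<t\le m-\varepsilon m$, the contribution is $O(m\cdot m^{-3/2}m^{-1/2})=O(1/m)$.
\item For $t>(1-\varepsilon)m$, the contribution is $O(m^{-3/2}\sum_{s\le \varepsilon m}G(s))=O(\sqrt\varepsilon/m)$.
\end{itemize}
Hence the numerator is $\sim\sqrt{2/(\pi m)}$, and dividing by $\P_2(V_0>m)$ gives the limit $1/2$.

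For \eqref{expectedhitting0}, the weight $t f(t)\asymp t^{-1/2}$ is no longer summable. The sum is therefore a Riemann sum of order one:
\begin{equation*}
\sum_{t\le m} t f(t)\,G(m-t)\;\approx\; c\int_0^1 x^{-1/2}(1-x)^{-1/2}\,\d x .
\end{equation*}
The endpoint regions are negligible, as in the previous step. Dividing by $\P_2(V_0>m)\asymp m^{-1/2}$ gives order $\sqrt m$. The explicit constant $\sqrt{\pi/2}$ must then be obtained by carefully tracking the parity and local-limit constants in $f$ and $G$. I expect this bookkeeping to be the fiddliest part of the symmetric case. It is not needed for the application in Proposition \ref{prop:KM}, where only the order $\sqrt m$ matters.

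\textbf{Drifted case.} For $p=\frac12(1-\lambda/\sqrt n)$, I would compare the law of $(Y_0,\dots,Y_m)$ with the symmetric one. The Radon--Nikodym density depends only on the endpoint:
\begin{equation*}
\frac{\d\P^{(p)}}{\d\P^{(1/2)}}=(1-\lambda^2/n)^{m/2}\left(\frac{1+\lambda/\sqrt n}{1-\lambda/\sqrt n}\right)^{(Y_m-Y_0)/2}.
\end{equation*}
For $m\le n$ this is bounded above and below by constants depending only on $\lambda$ and $A$ on the event $\{|Y_m-Y_0|\le A\sqrt n\}$.

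The main obstacle is the complement of this event. There I would use Gaussian (Hoeffding) tails under both measures, since the mean displacement is at most $|\lambda|\sqrt n$. These tails give a bound $e^{-cA^2}$ times the relevant probability of the path restricted to $\{V_0>m\}$. To make that comparison, I would apply the tail bound after conditioning on the walk's position at time $m/2$, and use that the conditioned walk has a $\sqrt m$-scale endpoint. Choosing $A$ large then transfers the symmetric bounds up to constant factors. This gives the two-sided bound \eqref{hittingconddistrlambda}: the symmetric limit $1/2$ becomes a pair of bounds $c_1$ and $1-c_1$ for all $m$ beyond a fixed threshold, while small $m$ is handled directly since the probability is then bounded away from $0$ and $1$ uniformly in $n$. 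The same transfer gives \eqref{expectedhittinglambda} with $c_2$ depending on $\lambda$.
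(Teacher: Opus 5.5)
\textbf{Part 1, the limit $1/2$.} Your argument here is the paper's argument: the first-passage decomposition at level $1$ as in \eqref{PVmlambda}, then a split of the convolution. You split at $\varepsilon m$ and $(1-\varepsilon)m$ where the paper splits at $m^{1-\varepsilon}$. This part is fine.

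\textbf{Part 1, the constant in \eqref{expectedhitting0}.} This is the gap. You defer the constant to ``bookkeeping'' and assert it will come out as $\sqrt{\pi/2}$. It does not. Use your own, correct, inputs: $f(t)\sim\sqrt{2/\pi}\,t^{-3/2}$ on odd $t$ and $G(s)\sim\sqrt{2/(\pi s)}$. Then the numerator satisfies
\[
\sum_{t\le m,\ t\text{ odd}}t\,f(t)\,G(m-t)\longrightarrow\frac12\cdot\frac2\pi\int_0^1x^{-1/2}(1-x)^{-1/2}\,\d x=1 .
\]
This is consistent with the generating function $sF'(s)\,(1-F(s))/(1-s)\sim(1-s)^{-1}$, where $F(s)=(1-\sqrt{1-s^2})/s$. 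Dividing by $\P_2(V_0>m)\sim2\sqrt{2/(\pi m)}$ gives $\sqrt{\pi m/8}$, which is half the stated value.

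The paper reaches $\sqrt{\pi/2}$ because, in the display following \eqref{EVlambda}, it replaces the denominator $\P_2(V_0>m)$ by $\sqrt{2/(\pi m)}$. That is the asymptotics of $\P_1(V_0>m)$, and it is inconsistent with the paper's own proof of the limit $1/2$. So \eqref{expectedhitting0} cannot be proved as stated. What is true, and what your method gives once the constants are tracked, is
\[
\E_2\bigl(V_1\ind_{\{V_1\le m\}}\bigm| V_0>m\bigr)\sim\sqrt{\pi/8}\,\sqrt m .
\]
As you note, only the order $\sqrt m$ is used in Proposition~\ref{prop:KM}.

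\textbf{Part 2, the drifted case.} Here you take a genuinely different route from the paper, and the step you flag as the main obstacle is not actually carried out. Hoeffding under $\P^{(\lambda)}$ bounds $\P^{(\lambda)}(|Y_m-Y_0|>A\sqrt n)$ by $e^{-cA^2}$, but without the factor $m^{-1/2}$ you need. Splitting at time $m/2$ produces two terms:
\begin{itemize}
\item a term $e^{-cA^2}\,\P_2^{(\lambda)}(V_0>m/2)$, which needs the a priori bound $\P_2^{(\lambda)}(V_0>m/2)=\O(m^{-1/2})$ that you are trying to prove;
\item a term $\P_2^{(\lambda)}(V_0>m/2,\ |Y_{m/2}-2|>A\sqrt n/2)$, which is of the same type as the one you started with.
\end{itemize}
This can be repaired. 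Write everything as an $\E^{(0)}$-expectation of the density, and play the Gaussian tails of the symmetric walk's endpoint on $\{V_0>m\}$ (from the reflection principle) against the growth $e^{C|\lambda||Y_m-Y_0|/\sqrt n}$ of the density. But then the truncation is superfluous.

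The paper avoids the issue entirely, and you could too, since your decomposition already reduces everything to $f$ and $G$. A first-passage path from $1$ to $0$ has a fixed endpoint, so the tilt is exact. By \eqref{hittingdistr},
\[
f^{(\lambda)}(2k+1)=\Bigl(1-\frac{\lambda}{\sqrt n}\Bigr)\Bigl(1-\frac{\lambda^2}{n}\Bigr)^k f^{(0)}(2k+1).
\]
For $n$ large, this factor is at most $2$ for all $k$, and bounded below by a positive constant depending on $\lambda$ for $k\le n$. Summing over $t\in(s,2s]$ gives the lower bound on $G^{(\lambda)}(s)$. Summing over all $t>s$ and adding $\P_1^{(\lambda)}(V_0=\infty)=\O(1/\sqrt n)$ gives the upper bound. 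Together, $G^{(\lambda)}(s)\asymp s^{-1/2}$ for $s\le n$, which is \eqref{taillambda}. Inserting these two-sided comparisons into the convolution formulas yields \eqref{hittingconddistrlambda} and \eqref{expectedhittinglambda} with no path-level change of measure.

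A small point: your claim that small $m$ is handled directly fails at $m=1$. There $\{V_1<1\}$ is empty, so the lower bound in \eqref{hittingconddistrlambda} needs $m\ge2$.
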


\begin{proof}[Proof of Lemma \ref{lemma:hittingdistr}]
For general $p\in(0,1)$, the distribution of the hitting time is given by
\begin{equation}\label{hittingdistr}
\P_1(V_0=2k+1)=\frac1{2k+1}\binom{2k+1}kp^{k+1}q^k
\end{equation}
for all $k=0,1,\dots$ which can be computed by the reflection principle and it can also be found in \cite{F68}.
The generating function of the probability distribution in \eqref{hittingdistr} is
\begin{equation}
g(s)=\frac{1-\sqrt{1-4pqs^2}}{2qs}.
\end{equation}
The probability that the hitting does not happen in finite time can be given as $\P_1(V_0=\infty)=1-g(1)$
which is equal to $0$ for $p\ge1/2$ and we have $\P_1(V_0=\infty)=(q-p)/q$ for $p<1/2$.

We specify $p=\frac12\left(1-\frac\lambda{\sqrt n}\right)$
and we use the notation $\P^{(\lambda)}$ and $\E^{(\lambda)}$ to denote the dependence on $\lambda$ in this proof
where $\P^{(0)}$ and $\E^{(0)}$ corresponds to the $p=1/2$ case.

By Stirling's approximation, the asymptotics of the probability in \eqref{hittingdistr} is given by
\begin{equation}\label{hittinglambda}
\P_1^{(\lambda)}(V_0=2k+1)\sim\frac1{2\sqrt\pi k^{3/2}}e^{-2\lambda^2k/n}
\end{equation}
as $k\to\infty$.
The tail behaviour of the hitting time can be computed exactly for $p=1/2$ by
\begin{equation}\label{tail0}
\P_1^{(0)}(V_0>m)\sim\sqrt{\frac2\pi}\frac1{\sqrt m}
\end{equation}
as $m\to\infty$ which follows by summation in \eqref{hittinglambda}.
For general $\lambda\in\R$ fixed, we have that
\begin{equation}\label{taillambda}
\frac{\wt c_1}{\sqrt m}\le\P_1^{(\lambda)}(V_0>m)\le\frac{\wt c_2}{\sqrt m}
\end{equation}
in the joint limit when $m,n\to\infty$ provided that $m\le n$.
The constants $\wt c_1,\wt c_2$ are positive and finite and they only depend on $\lambda$.
The upper bound in \eqref{taillambda} follows by the comparison with the symmetric $\lambda=0$ case.
The asymptotic value of $\P_1^{(\lambda)}(V_0=2k+1)$ is upper bounded by $\P_1^{(0)}(V_0=2k+1)$ using \eqref{hittinglambda} for $k$ large.
On the other hand, $\P_1^{(\lambda)}(V_0=\infty)=\lambda/\sqrt n$ for all $\lambda>0$ but this is still $\O(1/\sqrt m)$.
For the lower bound, we sum the right-hand side of \eqref{hittinglambda} between $m$ and $2m$ only
which results in $m$ terms all being at least a constant times $1/m^{3/2}$.

Next we expand the left-hand side of \eqref{hittingconddistr0} and \eqref{EVlambda} in general.
By the definition of the conditional probability and by using the law of total probability, one can write
\begin{multline}\label{PVmlambda}
\P_2^{(\lambda)}\left(V_1<m\bigm|V_0>m\right)\\
=\frac{\sum_{k=0}^{m/2-1}\P_2^{(\lambda)}(V_1=2k+1)\P_1^{(\lambda)}(V_0>m-2k-1)}
{\sum_{k=0}^{m/2-1}\P_2^{(\lambda)}(V_1=2k+1)\P_1^{(\lambda)}(V_0>m-2k-1)+\P_2^{(\lambda)}(V_1>m)}.
\end{multline}
On the other hand,
\begin{equation}\label{EVlambda}\begin{aligned}
\E_2^{(\lambda)}\left(V_1\mathbbm{1}_{\{V_1\leq m\}}\bigm|V_0>m\right)&=\sum_{k=0}^{m/2-1}(2k+1)\P_2^{(\lambda)}\left(V_1=2k+1\bigm|V_0>m\right)\\
&=\sum_{k=0}^{m/2-1}(2k+1)\frac{\P_2^{(\lambda)}(V_1=2k+1)\P_1^{(\lambda)}(V_0>m-2k-1)}{\P_2^{(\lambda)}(V_0>m)}.
\end{aligned}\end{equation}

Next we turn to the proof of \eqref{hittingconddistr0}.
For $\lambda=0$, the asymptotics of the numerator of the right-hand side of \eqref{PVmlambda}
can be obtained by splitting the sum at $m^{1-\varepsilon}$ for some small $\varepsilon>0$.
For the first part of the sum, we have
\begin{equation}\label{sum1stpart}
\sum_{k=0}^{m^{1-\varepsilon}}\P_2^{(0)}(V_1=2k+1)\P_1^{(0)}(V_0>m-2k-1)\sim\sqrt{\frac2\pi}\frac1{\sqrt m}
\end{equation}
because in this regime the probabilities $\P_1^{(0)}(V_0>m-2k-1)\sim\sqrt{\frac2\pi}\frac1{\sqrt m}$ for all $0\le k\le m^{1-\varepsilon}$
with an additive error of order $1/m^{1/2+\varepsilon}$ uniformly.
The sum of the probabilities $\P_2^{(0)}(V_1=2k+1)$ add up to $\P_2^{(0)}(V_1\le2m^{1-\varepsilon}+1)=1-\O(1/m^{1/2-\varepsilon/2})$.

The second part of the sum satisfies
\begin{equation}\label{sum2ndpart}
\sum_{k=m^{1-\varepsilon}}^{m/2-1}\P_2^{(0)}(V_1=2k+1)\P_1^{(0)}(V_0>m-2k-1)=\O\left(\frac1{m^{1-3\varepsilon/2}}\right)
\end{equation}
since we can bound $\P_2^{(0)}(V_1=2k+1)=\O(1/m^{3(1-\varepsilon)/2})$.
We also have
\begin{equation}
\sum_{k=m^{1-\varepsilon}}^{m/2-1}\P_1^{(0)}(V_0>m-2k-1)\sim\sqrt{\frac2\pi}\,\,\sum_{j=1}^{m/2-m^{1-\varepsilon}}\frac1{\sqrt{2j}}=\O(\sqrt m)
\end{equation}
which proves \eqref{sum2ndpart}.
Putting together \eqref{sum1stpart} and \eqref{sum2ndpart} and using it in \eqref{PVmlambda} along with \eqref{tail0} yields \eqref{hittingconddistr0}.

Using the asymptotics \eqref{hittinglambda} with $\lambda=0$ and \eqref{tail0} in \eqref{EVlambda}, we get
\begin{equation}\begin{aligned}
\E_2^{(\lambda)}\left(V_1\mathbbm{1}_{\{V_1\leq m\}}\bigm|V_0>m\right)
&\sim\sum_{k=0}^{m/2-1}(2k+1)\frac{\frac1{2\sqrt\pi k^{3/2}}\cdot\sqrt{\frac2\pi}\frac1{\sqrt{m-2k-1}}}{\sqrt{\frac2\pi}\frac1{\sqrt m}}\\
&\sim\frac1{\sqrt\pi}\sqrt m\frac1m\sum_{k=0}^{m/2-1}\frac1{\sqrt{\frac km}}\frac1{\sqrt{1-\frac{2k+1}m}}\\
&\sim\sqrt{\frac\pi2}\sqrt m
\end{aligned}\end{equation}
since
\begin{equation}
\frac1m\sum_{k=0}^{m/2-1}\frac1{\sqrt{\frac km}}\frac1{\sqrt{1-\frac{2k+1}m}}\to\int_0^{1/2}\frac1{\sqrt x}\frac1{\sqrt{1-2x}}\,\d x=\frac\pi{\sqrt2}
\end{equation}
as $m\to\infty$ which proves \eqref{expectedhitting0}.

The proofs of \eqref{hittingconddistrlambda} and \eqref{expectedhittinglambda} follow by comparison with their $\lambda=0$ counterparts
\eqref{hittingconddistr0} and \eqref{expectedhittinglambda}.
The input for proving \eqref{hittingconddistr0} and \eqref{expectedhitting0} are the asymptotics of the hitting time distribution
given in \eqref{hittinglambda} with $\lambda=0$ and in \eqref{tail0}.
These asymptotics are then used in \eqref{PVmlambda} and \eqref{EVlambda}.
Let $\lambda\in\R$ be fixed.
The expansions \eqref{PVmlambda} and \eqref{EVlambda} are still valid.
The asymptotic expansion \eqref{hittinglambda} is now used with the given value of $\lambda$.
Since we assume that $m\le n$, the asymptotics only change by constant factors.
More precisely, instead of the asymptotics in \eqref{hittinglambda}, we can give upper and lower bounds with two different positive and finite constant prefactors.
Similarly, instead of the asymptotics \eqref{tail0}, the bounds given in \eqref{taillambda} are available for the tail of the hitting time.
Using these asymptotic results, the bounds in \eqref{hittingconddistrlambda} and \eqref{expectedhittinglambda} follow in the same way as the asymptotics in the $\lambda=0$ case.
\end{proof}

\begin{proof}[Proof of Proposition \ref{prop:KM}]
We start with a detailed proof of the first part about the $p=1/2-\lambda/(2\sqrt n)$ case.
In the end, we show how the statement for fixed $p<1/2$ follows.

Conditionally given $M$ and $L$, we introduce the reversed and reflected random walk $\wt S_i$ seen from the point $(M,S_M)$ given by
\begin{equation}\label{defStilde}
\wt S_i=S_M-S_{M-i}
\end{equation}
for $i=0,1,\dots,M-L$.
By the definition \eqref{defStilde} and by the leftmost maximum property of $M$,
it follows that the law of $\wt S_i$ is that of a simple random walk with drift $2p-1$ starting at $\wt S_0=0$
conditioned not to return to $0$ until the $(M-L)$th step.
In particular we have $S_1=1$ and $S_2=2$.

Next we introduce a sequence of indicators $I_k$ and the corresponding random times $\tau_k$.
We let $\tau_0=1$ and we let $I_0=\ind(\wt S_{\tau_0+i}>\wt S_{\tau_0}-1\mbox{ for }i=1,\dots,M-L-\tau_0)$
which is almost surely equal to $1$ since $\wt S_i$ cannot return to $0$.
Next we define $\tau_1=\tau_0+1=2$.
In general, given $\tau_k$, we let
\begin{equation}
I_k=\ind\left(\wt S_{\tau_k+i}>\wt S_{\tau_k}-1\mbox{ for }i=1,\dots,M-L-\tau_k\right)
\end{equation}
for $k=1,2,\dots$, that is, $I_k$ is the indicator that the random walk $(\wt S_{\tau_k+i},i=1,2,\dots,M-L-\tau_k)$
stays strictly above the level $\wt S_{\tau_k}-1$.

If $I_k=1$ then we let $\tau_{k+1}=\tau_k+1$ and in this case we have $\wt S_{\tau_{k+1}}=\wt S_{\tau_k}+1$.
If $I_k=0$ then we define
\begin{equation}\label{deftauk+1}
\tau_{k+1}=\min\left\{i\in\{1,\dots,M-L-\tau_k\}:\wt S_{\tau_k+i}=\wt S_{\tau_k}-1\right\}+1
\end{equation}
where the minimum above is the time of the first return to the level $\wt S_{\tau_k}-1$ which is finite by the fact that $I_k=0$.
The addition of $1$ on the right-hand side of \eqref{deftauk+1} ensures that $\wt S_{\tau_{k+1}}=\wt S_{\tau_k}$.
We let the $k$th excursion last between times $\tau_k$ and $\tau_{k+1}$ with length $W_k=\tau_{k+1}-\tau_k$.

The pair $(\tau_k,I_k)$ forms a Markov chain and its transition probabilities are described below.
For deterministic $t_k,t_{k+1}\in\Z_+$ and $i_k,i_{k+1}\in\{0,1\}$, one can write the conditional probability
\begin{equation}\label{transprob}
P((t_k,i_k),(t_{k+1},i_{k+1}))=\P\left((\tau_{k+1},I_{k+1})=(t_{k+1},i_{k+1})\bigm|(\tau_k,I_k)=(t_k,i_k)\right)
\end{equation}
depending on $i_k$ as follows.
If $i_k=1$ then the transition probability $P((t_k,1),(t_{k+1},i_{k+1}))$ is nonzero only if $t_{k+1}=t_k+1$.
In that case we have
\begin{equation}\begin{aligned}
P((t_k,1),(t_k+1,1))&=1-P((t_k,1),(t_k+1,0))\\
&=\P_2\left(V_1>M-L-t_{k+1}\bigm|V_0>M-L-t_{k+1}\right)
\end{aligned}\end{equation}
since between times $\tau_{k+1}$ and $M-L$, the walk $\wt S_j$ is conditioned not to go strictly below $\wt S_{\tau_{k+1}}-1$.

If $i_k=0$ then the distribution of $\tau_{k+1}$ is given by
\begin{equation}\label{ik0distrtau}
P((t_k,0),(t_{k+1},\{0,1\}))=\P_2\left(V_1=t_{k+1}-t_k\bigm|V_1\le M-L-t_k<V_0\right)
\end{equation}
since the random walk $\wt S_j$ conditioned not to go strictly below $\wt S_{\tau_k}-1$ between times $\tau_k$ and $M-L$
does visit $\wt S_{\tau_k}-1$ and the time of the first visit is $\tau_{k+1}$
with distribution given by the right-hand side of \eqref{ik0distrtau}.
Further, given that $\tau_{k+1}=t_{k+1}$, we have
\begin{equation}\label{ik0distri}\begin{aligned}
\P\left(I_{k+1}=1\bigm|\tau_{k+1}=t_{k+1}\right)
&=1-\P\left(I_{k+1}=0\bigm|\tau_{k+1}=t_{k+1}\right)\\
&=\P_2\left(V_1>M-L-t_{k+1}\bigm|V_0>M-L-t_{k+1}\right)
\end{aligned}\end{equation}
because the walk $\wt S_j$ between $\tau_{k+1}$ and $M-L$ is conditioned not to go strictly below $\wt S_{\tau_{k+1}}-1$.
Then the transition probabilities $P((t_k,0),(t_{k+1},i_{k+1}))$ are given by the products of the probabilities
in \eqref{ik0distrtau} and \eqref{ik0distri}.

With the definition $W_k=\tau_{k+1}-\tau_k$ being the length of the $k$th excursion, we let
\begin{equation}
N=\min\{k=1,2,\ldots:I_k=I_{k-1}=1\}
\end{equation}
to be the first time when there are two consecutive trivial excursions which is a stopping time.
Then we can write
\begin{equation}\label{M-K}
M-K+1=\sum_{k=1}^NW_k
\end{equation}
by the definition of $K$.
Since $N$ is a stopping time for the time inhomogeneous Markov chain $(\tau_k,I_k)$ and for its natural filtration $\mathcal F_k$,
the computation similar to the proof of Wald's identity yields
\begin{equation}\begin{aligned}
\E\left(\sum_{k=1}^{N}W_k\right)
&=\sum_{k=1}^{\infty}\E\left(W_k\mathbbm{1}_{\{N\geq k\}}\right)\\
&=\sum_{k=1}^{\infty}\E\left(\E\left(W_k\mathbbm{1}_{\{N\geq k\}}\bigm|\mathcal F_{k-1}\right)\right)\\
&=\sum_{k=1}^{\infty}\E\left(\E\left(W_k\bigm|\mathcal F_{k-1}\right)\mathbbm1_{\{N\ge k\}}\right).
\end{aligned}\end{equation}
The conditional expectation $\E(W_k|\mathcal F_{k-1})$ is bounded by a constant multiple of $\sqrt n$ uniformly in $k$
by Lemma \ref{lemma:hittingdistr}.
The same lemma also implies that $N$ has an exponential tail and in particular $\E(N)$ is bounded by a constant.
Hence the expectation of \eqref{M-K} is upper bounded by a constant times $\sqrt n$ which completes the proof of \eqref{KM} and that of the first part.

If $p<1/2$ is fixed then the same decomposition into excursions of length $W_k$ remain valid.
The difference is that the number and the expected lengths of excursions is now related to those of a random walk with a fixed positive drift.
Instead of Lemma \ref{lemma:hittingdistr} we have that $\P(V_0=\infty)>0$ and $V_1$ has an exponential tail.
Hence we have that
\begin{equation}
\lim_{m\to\infty}\P_2\left(V_1<m\bigm|V_0>m\right)=\P_2\left(V_1<\infty\bigm|V_0=\infty\right)=\frac{\frac pq\frac{q-p}q}{1-\left(\frac pq\right)^2}=p.
\end{equation}
On the other hand, the conditional expectation $\E_2\left(V_1\mathbbm{1}_{\{V_1\leq m\}}\bigm|V_0>m\right)$ is bounded by a constant uniformly in $m$.
These bounds are enough to conclude the second part of the proposition.
\end{proof}

\bibliography{3type_seq}
\bibliographystyle{alpha}

\end{document}